\tikzset{labl/.style={anchor=south, rotate=270, inner sep=.5mm}}
\title{Completions of triangulated categories}
\author{Henning Krause}
\address{Fakult\"at f\"ur Mathematik\\
Universit\"at Bielefeld\\ D-33501 Bielefeld\\ Germany}
\email{hkrause@math.uni-bielefeld.de}
\numberwithin{equation}{section}
\theoremstyle{plain}
\newtheorem{prop}[equation]{Proposition}
\newtheorem{lem}[equation]{Lemma} 
\newtheorem{cor}[equation]{Corollary}
\theoremstyle{definition}
\newtheorem{defn}[equation]{Definition}
\newtheorem{exm}[equation]{Example}
\newtheorem{probl}[equation]{Problem}
\theoremstyle{remark}
\newtheorem{rem}[equation]{Remark}
\newcommand{\art}{\operatorname{art}}
\newcommand{\Cau}{\operatorname{Cau}}
\newcommand{\Coh}{\operatorname{Coh}}
\newcommand{\colim}{\operatorname*{colim}}
\newcommand{\Cone}{\operatorname{Cone}}
\newcommand{\End}{\operatorname{End}}
\newcommand{\END}{\operatorname{\mathcal{E}\!\!\;\mathit{nd}}}
\newcommand{\Ext}{\operatorname{Ext}}
\newcommand{\fl}{\operatorname{fl}}
\newcommand{\Fun}{\operatorname{Fun}}
\newcommand{\hocolim}{\operatorname*{hocolim}}
\newcommand{\Hom}{\operatorname{Hom}}
\newcommand{\HOM}{\operatorname{\mathcal{H}\!\!\;\mathit{om}}}
\newcommand{\id}{\operatorname{id}}
\newcommand{\Ind}{\operatorname{Ind}}
\newcommand{\inj}{\operatorname{inj}}
\newcommand{\Inj}{\operatorname{Inj}}
\newcommand{\Lex}{\operatorname{Lex}}
\renewcommand{\mod}{\operatorname{mod}}
\newcommand{\Mod}{\operatorname{Mod}}
\newcommand{\noeth}{\operatorname{noeth}}
\newcommand{\Ph}{\operatorname{Ph}}
\newcommand{\proj}{\operatorname{proj}} 
\newcommand{\Proj}{\operatorname{Proj}}
\newcommand{\soc}{\operatorname{soc}}
\newcommand{\St}{\operatorname{St}}
\newcommand{\StMod}{\operatorname{StMod}}
\newcommand{\stmod}{\operatorname{stmod}}
\newcommand{\thick}{\operatorname{thick}}
\newcommand{\Ab}{\mathrm{Ab}}
\newcommand{\cau}{\mathrm{Cau}}
\newcommand{\op}{\mathrm{op}}
\newcommand{\Set}{\mathrm{Set}}
\newcommand{\iso}{\xrightarrow{\raisebox{-.4ex}[0ex][0ex]{$\scriptstyle{\sim}$}}}
\newcommand{\longiso}{\xrightarrow{\ \raisebox{-.4ex}[0ex][0ex]{$\scriptstyle{\sim}$}\ }}
\newcommand{\lto}{\longrightarrow}
\newcommand{\xto}{\xrightarrow}
\newcommand*{\intref}[2]{\def\tmp{#1}\ifx\tmp\empty\hyperref[#2]{\ref*{#2}}\else\hyperref[#2]{#1~\ref*{#2}}\fi}
\def\A{\mathcal A} 
\def\C{\mathcal C}
\def\D{\mathcal D}
\def\I{\mathcal I}
\def\J{\mathcal J}
\def\P{\mathcal P}
\def\T{\mathcal T}
\def\X{\mathcal X}
\def\bfi{\mathbf i}
\def\bfD{\mathbf D} 
\def\bfK{\mathbf K}
\def\bbN{\mathbb N}
\def\bbZ{\mathbb Z}
\newcommand{\fm}{\mathfrak{m}}
\def\a{\alpha}
\def\b{\beta}
\def\g{\gamma}
\def\p{\phi}
\def\s{\sigma}
\def\Ga{\Gamma}
\def\La{\Lambda}
\def\Si{\Sigma}
\begin{document}

\begin{abstract}
These notes for a master class at Aarhus University (March 22--24,
2023) provide an introduction to the theory of completion for
triangulated categories.
\end{abstract}

\date{\today}

\maketitle
\tableofcontents

\setcounter{section}{-1}

\section{Introduction}

Completions arise in all parts of mathematics. For example, the real
numbers are the completion of the rationals. This construction uses
equivalence classes of Cauchy sequences and is due to Cantor
\cite{Ca1872} and Méray \cite{Me1869}.  There is an obvious
generalisation in the setting of metric spaces, leading to the
completion of a metric space. Completions of rings and modules play an
important role in algebra and geometry. For categories there is the
notion of ind-completion due to Grothendieck and Verdier \cite{GV1972}
which provides an embedding into categories with filtered colimits.

In these notes we combine all these ideas to study completions of
triangulated categories. We do not offer an elaborated theory and
rather look at various examples which arise naturally in
representation theory of algebras.

These notes are based on three lectures; they are divided into nine
sections (roughly three per lecture). The first three sections provide
some preparations. In particular, we introduce the ind-completion of a
category and explain completions in the context of modules over
commutative rings.  In \S\ref{se:completion} we propose a definition
of a partial completion for triangulated categories; roughly speaking
these are full subcategories of the ind-completion which admit a
triangulated structure. In \S\ref{se:pure-inj} we provide various
criteria for an exact functor between triangulated categories to be a
partial completion, and \S\ref{se:t-vs-c} discusses a general set-up
in terms of compactly generated triangulated categories which covers
many of the examples that arise in nature. The final sections provide
detailed proofs for some of the examples and discuss the role of
enhancements.

It is my pleasure to thank Charley Cummings, Sira Gratz, and Davide
Morigi for organising the \emph{Categories, clusters, and completions
  master class} at Aarhus University. In particular, I am very
grateful to them for suggesting the topic of this series of lectures.
My own interest in this subject arose from my collaboration with Dave
Benson, Srikanth Iyengar, and Julia Pevtsova; I am most grateful for
their inspiration and for their helpful comments on these notes. Last
but not least let me thank Amnon Neeman for his comments, and let me
recommend the notes from his lectures for another perspective on this
fascinating subject.

This work was partly supported by the Deutsche Forschungsgemeinschaft
(SFB-TRR 358/1 2023 - 491392403).

\section{The ind-completion of a category}

Let $\C$ be an essentially small category. We write
$\Fun(\C^\op,\Set)$ for the category of functors $\C^\op\to\Set$.
Morphisms in $\Fun(\C^\op,\Set)$ are natural transformations. Note
that this category is complete and cocomplete, that is, all limits and
colimits exist and are computed pointwise in $\Set$.

For functors $E$ and $F$ we write $\Hom(E,F)$ for the set of morphisms
from $E$ to $F$.  The \emph{Yoneda functor}
\[\C\lto \Fun(\C^\op,\Set),\quad X\mapsto
  H_X:=\Hom(-,X)\]
is fully faithful; this follows from Yoneda's lemma which provides a
natural bijection
\[\Hom(H_X,F)\iso F(X).\]

Any  functor $F\colon\C^\op\to\Set$ can be written
canonically as a colimit of representable functors
\begin{equation}\label{eq:slice}
\colim_{H_X\to F} H_X \iso F
\end{equation}
where the colimit is taken over the \emph{slice category} $\C/F$; see
\cite[Proposition~3.4]{GV1972}. Objects in $\C/F$ are morphisms
$ H_X\to F$ where $X$ runs through the objects of $\T$. A morphism in
$\C/F$ from $H_X\xto{\phi} F$ to $H_{X'}\xto{\phi'} F$ is a morphism
$\alpha \colon X\to X'$ in $\C$ such that $\phi'
H_{\alpha}=\phi$.

\begin{defn}
  A \emph{filtered colimit} in a category $\D$ is the colimit of a
functor $\I\to \D$ such that the category $\I$ is
\emph{filtered}, that is
\begin{enumerate}
\item the category is non-empty,
\item  given objects $i,i'$ there is an object $j$ with
  morphisms $i\to j\leftarrow i'$, and
\item  given morphisms
$\a,\a'\colon i\to j$ there is a morphism $\b\colon j\to k$ such that
$\b\a=\b\a'$.
\end{enumerate}
\end{defn}

\begin{rem}
  A partially ordered set $(I,\le)$ can be viewed as a category: the
objects are the elements of $I$ and there is a unique morphism
$i\to j$ whenever $i\le j$. This category is filtered if and only if
$(I,\le)$ is non-empty and \emph{directed}, that is, for each pair of
elements $i,i'$ there is an element $j$ such that $i,i'\le j$. When
the colimit of a functor $\I\to\D$ is given by a directed partially
ordered set, this colimit is also called \emph{directed colimit} (or
confusingly \emph{direct limit}).

For each essentially small filtered category $\I$ there exists a
functor $\p\colon \J\to\I$ such that $\J$ is the category
corresponding to a directed partially ordered set and any functor
$X\colon \I\to \D$ induces an isomorphism
\[\colim_{j\in\J} X(\p(j))\iso\colim_{i\in\I}X(i).\] This fact will not be needed, but it may be useful to
know; see \cite[Proposition~8.1.6]{GV1972}.
\end{rem}

Let us get back to functors $F\colon \C^\op\to\Set$. It is not
difficult to show that $F$ is a filtered colimit of representable
functors if and only if the slice category $\C/F$ is filtered.

\begin{defn}
  The \emph{ind-completion} of $\C$ is the category of functors
  $F\colon \C^\op\to\Set$ that are filtered colimits of representable
  functors.  We denote this category by $\Ind\C$; it is a
category with filtered colimits and the Yoneda functor $\C\to\Ind\C$ is
the universal functor from $\C$ to a category with filtered colimits.
\end{defn}

It is convenient to identify $\C$ with the full subcategory of
representable functors in $\Ind\C$.  Let $X=\colim_i X_i$ and
$Y=\colim_j Y_j$ be objects in $\Ind\C$, written as filtered colimits
of objects in $\C$.

\begin{lem}\label{le:Hom-completion}
  We have  natural bijections
\[ \colim_i\Hom(C,X_i)\iso\Hom(C,\colim_i X_i)\quad \text{for each}
  \quad C\in\C\]
and  
   \[ \Hom(X,Y)\iso\lim_i\colim_j\Hom(X_i,Y_j).\]
\end{lem}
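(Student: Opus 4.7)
The plan is to establish the first bijection directly from the Yoneda lemma together with the fact that filtered colimits in $\Ind\C$ are computed pointwise in $\Fun(\C^\op,\Set)$, and then deduce the second from the first by a formal manipulation using the universal property of colimits.

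For the first bijection, I would begin by observing that, by definition, an object of $\Ind\C$ is a functor $\C^\op\to\Set$ presented as a filtered colimit of representables. Since $\Fun(\C^\op,\Set)$ is cocomplete and colimits there are pointwise in $\Set$, such a filtered colimit computed in $\Fun(\C^\op,\Set)$ is again a filtered colimit of representables, hence lies in $\Ind\C$; so the colimit $\colim_i X_i$ coincides whether computed in $\Ind\C$ or in $\Fun(\C^\op,\Set)$. Evaluating at $C$ we obtain
\[
  (\colim_i X_i)(C)\;=\;\colim_i X_i(C)\;=\;\colim_i \Hom(C,X_i),
\]
and Yoneda's lemma identifies the left-hand side with $\Hom(H_C,\colim_i X_i)$. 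Identifying $C$ with $H_C$ yields the first formula.

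For the second bijection, I would first use the universal property of the colimit $X=\colim_i X_i$ (which is contravariantly a limit in the first variable of $\Hom$) to obtain
\[
  \Hom(X,Y)\;=\;\Hom(\colim_i X_i,Y)\;\iso\;\lim_i \Hom(X_i,Y).
\]
Since each $X_i$ lies in $\C$, the first part of the lemma applies to rewrite $\Hom(X_i,Y)=\Hom(X_i,\colim_j Y_j)\iso\colim_j\Hom(X_i,Y_j)$. Substituting and taking the limit over $i$ gives $\Hom(X,Y)\iso\lim_i\colim_j\Hom(X_i,Y_j)$, as required.

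The only conceptual step that requires care is the identification of the colimit of the $X_i$ taken in $\Ind\C$ with the one taken in the ambient functor category. This is really the crux: once one knows that $\Ind\C$ is closed in $\Fun(\C^\op,\Set)$ under these particular filtered colimits — which is immediate from the definition — everything else reduces to Yoneda and the universal property of colimits. No set-theoretic subtleties arise because $\C$ is essentially small, so the slice categories indexing the colimits are honest small filtered categories.
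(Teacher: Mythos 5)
Your proof is correct and takes essentially the same approach as the paper's: the first bijection via Yoneda's lemma (you merely make explicit the pointwise computation of filtered colimits in $\Fun(\C^\op,\Set)$, which the paper compresses into ``immediate consequence of Yoneda's lemma''), and the second via the identical two-step chain $\Hom(\colim_i X_i,\colim_j Y_j)\cong\lim_i\Hom(X_i,\colim_j Y_j)\cong\lim_i\colim_j\Hom(X_i,Y_j)$. Your closing remark correctly identifies the one point needing care---that the colimit of the $X_i$ agrees in $\Ind\C$ and in the ambient functor category---and since each $X_i$ is representable this is indeed immediate from the definition of $\Ind\C$.
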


\begin{proof}
The first bijection is an immediate consequence of Yoneda's lemma. For
the second bijection we compute
\begin{align*}
  \Hom(X,Y)&=\Hom(\colim_i X_i, \colim_j\Hom Y_j)\\
           &\cong \lim_i\Hom(X_i, \colim_j Y_j)\\
           &\cong \lim_i\colim_j\Hom(X_i,Y_j).\qedhere
\end{align*}  
\end{proof}

The ind-completion takes a more familar form when we consider additive
categories. The following examples use the fact that a module $M$ over
any ring is finitely presented if and only if the representable
functor $\Hom(M,-)$ preserves filtered colimits.

\begin{exm}
  Let $A$ be a ring and $\C=\proj A$ the category of finitely
  generated projective $A$-modules. A theorem of Lazard says that a
  module is flat if and only if it is a filtered colimit of finitely
  generated free modules. Thus $\Ind\C$ identifies with the category
  of flat $A$-modules.
\end{exm}

\begin{exm}
  Let $A$ be a ring and $\C=\mod A$ the category of finitely
  presented $A$-modules. It is well known that any
  module is is a filtered colimit of finitely
  presented modules. Thus $\Ind\C$ identifies with the category
  of all $A$-modules.
\end{exm}

Here is a useful fact which connects the above examples; it is due to
Lenzing \cite[Proposition~2.1]{Le1983}. Let $\D\subseteq\C$ be a full
subcategory. The inclusion induces a fully faithful functor
$\Ind\D\to\Ind\C$; this follows easily from
Lemma~\ref{le:Hom-completion}. Thus we may view $\Ind\D$ as a full
subcategory of $\Ind\C$.

\begin{lem}\label{le:Lenzing}
  An object $X\in\Ind\C$ belongs to $\Ind\D$ if and only if each
  morphism $C\to X$ with $C\in\C$ factors through an object in $\D$.
\end{lem}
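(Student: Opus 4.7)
The plan is to prove the two directions separately. The forward direction is immediate from Lemma~\ref{le:Hom-completion}: if $X\in\Ind\D$, write $X=\colim_j D_j$ with $D_j\in\D$, so for any $C\in\C$ one has $\Hom(C,X)\cong\colim_j\Hom(C,D_j)$, which forces every morphism $C\to X$ to factor through some $D_j$.

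For the converse, I will exhibit $X$ itself as a filtered colimit of objects of $\D$ by restricting the canonical presentation \eqref{eq:slice}. Since $X\in\Ind\C$, the slice $\C/X$ is filtered and $X=\colim_{C\to X} C$. Let $\D/X\subseteq\C/X$ be the full subcategory consisting of morphisms $D\to X$ with $D\in\D$. The key step is to show that the inclusion $\iota\colon\D/X\hookrightarrow\C/X$ is cofinal, that is, for every object $(C,\phi)\in\C/X$ the comma category of factorizations $C\to D\to X$ of $\phi$ through objects $D\in\D$ is non-empty and connected. Granted this, $X=\colim_{D\to X}D$ is a filtered colimit of objects of $\D$, so $X\in\Ind\D$.

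Non-emptiness of this comma category is precisely the given factorization hypothesis. Connectedness is where the work lies: given two factorizations $C\to D_i\to X$ for $i=1,2$, I will use filteredness axiom~(2) of $\C/X$ to produce a common target $E\to X$ with morphisms $D_i\to E$ over $X$, then axiom~(3) to coequalize the resulting two composites $C\rightrightarrows E$ by a further morphism $E\to E'$ over $X$; finally I apply the hypothesis to factor $E'\to X$ through some $D'\in\D$. The composites $D_i\to E\to E'\to D'$ then provide morphisms in the factorization category linking both $(C\to D_i\to X)$ to the common object $(C\to D'\to X)$.

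The main obstacle is bookkeeping: at each step one must verify compatibility with the structural maps to $X$ so that the chosen morphisms really lie in $\C/X$ rather than just in $\C$, and in particular that the equalization forced by axiom~(3) is applied to a pair of morphisms of the slice category. Filteredness of $\D/X$ itself follows from the same two ingredients (the hypothesis and filteredness of $\C/X$), closing the argument.
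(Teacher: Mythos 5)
Your proof is correct and takes essentially the same route as the paper: the forward direction via Lemma~\ref{le:Hom-completion}, and the converse by writing $X=\colim_{C\to X}C$ over the filtered slice category $\C/X$ and showing the full subcategory $\D/X$ is cofinal, so that $X=\colim_{D\to X}D$ lies in $\Ind\D$. The only difference is that you verify cofinality by hand (non-emptiness of the factorization categories from the hypothesis, connectedness via filteredness axioms (2) and (3) followed by one more application of the hypothesis), where the paper delegates this to \cite[Proposition~8.1.3]{GV1972}; your verification is sound.
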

\begin{proof}
  If $X$ belongs to $\Ind\D$, then it follows from Lemma~\ref{le:Hom-completion} that
  each morphism from $C\in\C$ factors through an object in $\D$.  Now
  write $X=\colim_{C\to X} C$ as a filtered colimit of objects in $\C$ as in
  \eqref{eq:slice}, using the slice category $\C/X$ as index
  category. The condition that each morphism $C\to X$ with $C\in\C$
  factors through an object in $\D$ is precisely the fact that $\D/X$
  is a \emph{cofinal subcategory} of $\C/X$. Thus the induced morphism
  \[\colim_{D\to X} D\lto \colim_{C\to X} C=X\]
is an isomorphism; see \cite[Proposition~8.1.3]{GV1972}.
\end{proof}

The above criterion applied to the inclusion $\proj A\subseteq\mod A$
shows that a module is flat if and only if each morphism from a finitely
presented module factors through a finitely generated projective module.

\begin{exm}
  Let $\C$ be an essentially small additive category with cokernels. A
  functor $F\colon\C^\op\to\Ab$ is \emph{left exact} if it is additive
  and sends each cokernel sequence $X\to Y\to Z\to 0$ to an exact
  sequence $0\to FZ\to FY\to FX$ of abelian groups. One can show that
  $F$ is left exact if and only if it is a filtered colimit of
  representable functors; see \cite[Lemma~11.1.14]{Kr2022}. Thus $\Ind\C$
  identifies with the category $\Lex(\C^\op,\Ab)$ of left exact
  functors $\C^\op\to\Ab$. If $\C$ is abelian, then $\Lex(\C^\op,\Ab)$
  is an abelian Grothendieck category and the Yoneda functor is exact.
\end{exm}

\begin{exm}
  Let $A$ be a commutative noetherian ring. Let $\C=\fl A$ denote the
  category of finite length modules (i.e.\ the finitely generated
  torsion modules). Then $\Ind\C$ identifies with the category of all
  torsion $A$-modules. This follows for example by applying the
  criterion in Lemma~\ref{le:Lenzing} to the inclusion
  $\fl A\subseteq\mod A$.
\end{exm}

The next example is relevant because we wish to
study filtered colimits arising from (or even in) triangulated categories.

\begin{exm}\label{ex:coh}
  Let $\C$ be an essentially small triangulated category.  Then
  $\Ind\C$ identifies with the category $\Coh\C$ of cohomological
  functors $\C^\op\to\Ab$. Recall that $\C^\op\to\Ab$ is
  \emph{cohomological} if it is additive and sends exact triangles to
  exact sequences.
\end{exm}

\begin{proof}
  Any representable functor is cohomological, and taking filtered
  colimits (in the category $\Ab$) is exact. Thus a filtered colimit
  of representable functors is cohomological. Conversely, suppose that
  $F\colon \C^\op\to\Ab$ is cohomological. Then it easily checked that
  the slice category $\C/F$ is filtered.
\end{proof}

\section{The sequential completion of a category}

The ind-completion of a category allows one to take arbitrary filtered
colimits, so colimits of functors that are indexed by any filtered
category. In the following we restrict to colimits of functors (or
sequences) that are indexed by the natural numbers.

Let $\bbN=\{0,1,2,\ldots\}$ denote the set of natural numbers, viewed
as a category with a single morphism $i\to j$ if $i\le j$.

Now fix a category $\C$ and consider the category $\Fun(\bbN,\C)$ of
functors $\bbN\to\C$.  An object $X$ is nothing but a sequence of
morphisms $X_0\to X_1\to X_2\to \cdots$ in $\C$, and the morphisms
between functors are by definition the natural transformations.  We
call $X$ a \emph{Cauchy sequence} if for all $C\in\C$ the induced map
$\Hom(C,X_i)\to\Hom(C,X_{i+1})$ is invertible for $i\gg 0$. This
means:
\[\forall\; C\in\C\;\; \exists\; n_C\in\bbN\;\; \forall\; j\ge i\ge
  n_C\;\; \Hom(C,X_i)\xto{_\sim}\Hom(C,X_j).\]

Let $\Cau(\bbN,\C)$ denote the full subcategory consisting of all
Cauchy sequences.  A morphism $X\to Y$ is \emph{eventually invertible}
if for all $C\in\C$ the induced map $\Hom(C,X_i)\to\Hom(C,Y_i)$ is
invertible for $i\gg 0$. This means:
\[\forall\; C\in\C\;\; \exists\; n_C\in\bbN\;\; \forall\;  i\ge
  n_C\;\; \Hom(C,X_i)\xto{_\sim}\Hom(C,Y_i).\] Let $S$ denote the class
of eventually invertible morphisms in $\Cau(\bbN,\C)$.

\begin{defn}
  The \emph{sequential Cauchy completion} of $\C$ is the category
  \[\Ind_\cau\C:= \Cau(\bbN,\C)[S^{-1}]\] that is obtained from the
  Cauchy sequences by formally inverting all eventually invertible
  morphisms, together with the \emph{canonical functor}
  $\C\to\Ind_\cau\C$ that sends an object $X$ in $\C$ to the constant
  sequence $X\xto{\id} X\xto{\id} \cdots$.
\end{defn}

A  sequence $X\colon\bbN\to\C$ induces a functor
\[\widetilde X\colon\C^\op\lto\Set,\quad C\mapsto\colim_i\Hom(C,X_i),\]
and this yields a functor
\begin{equation*}\label{eq:seq-compl}
  \Ind_\cau\C\lto\Ind\C\subseteq\Fun(\C^\op,\Set),\quad
  X\mapsto\widetilde X,
\end{equation*}
because the assignment $X\mapsto\widetilde X$ maps eventually invertible
morphisms to isomorphisms.

\begin{prop}[{\cite[Proposition~2.4]{Kr2020}}]\label{pr:completion}
  The canonical functor $\Ind_\cau\C\to\Ind\C$ is fully
  faithful; it identifies $\Ind_\cau\C$ with the colimits of
  sequences of representable functors that correspond to Cauchy
  sequences in $\C$.\qed
\end{prop}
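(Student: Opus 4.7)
The plan is to analyse the functor $\Phi\colon\Cau(\bbN,\C)\to\Ind\C$ sending $X=(X_0\to X_1\to\cdots)$ to $\widetilde X=\colim_i H_{X_i}$, show it inverts the class $S$ of eventually invertible morphisms (so descends to $\bar\Phi\colon\Ind_\cau\C\to\Ind\C$), and then verify that $\bar\Phi$ is fully faithful with the claimed essential image. That $\Phi$ inverts $S$ is immediate from Lemma~\ref{le:Hom-completion}: for each $C\in\C$ one has $\Hom(H_C,\widetilde X)=\colim_i\Hom(C,X_i)$, so a morphism whose components eventually induce isomorphisms on $\Hom(C,-)$ is sent to an isomorphism in $\Ind\C$. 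The essential image assertion is then built into the definition of $\Phi$: any $\widetilde X$ with $X$ Cauchy is a filtered colimit (indexed by $\bbN$) of representables arising from a Cauchy sequence, and conversely every such object is of this form.

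The bulk of the argument is fullness. Given Cauchy sequences $X,Y$ and a morphism $\phi\colon\widetilde X\to\widetilde Y$ in $\Ind\C$, the formula
\[
\Hom(\widetilde X,\widetilde Y)\iso\lim_i\colim_j\Hom(X_i,Y_j)
\]
of Lemma~\ref{le:Hom-completion} represents $\phi$ by a compatible family of elements $\phi_i\in\colim_j\Hom(X_i,Y_j)$. Because $Y$ is Cauchy, each colimit stabilises, so $\phi_i$ has a lift to some $f_i\colon X_i\to Y_{j(i)}$. I would then argue that, by enlarging the indices $j(i)$ if necessary, one can arrange $j(0)\le j(1)\le\cdots$ and make the naturality squares $f_{i+1}\circ x_i=y_{j(i),j(i+1)}\circ f_i$ hold on the nose rather than only after further composition (this uses Cauchyness of $Y$ one more time: two morphisms $X_i\to Y_k$ that agree in $\colim_j\Hom(X_i,Y_j)$ agree after composing with a single transition). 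The outcome is an honest morphism $f\colon X\to Y\circ j$ in $\Cau(\bbN,\C)$ where $Y\circ j$ is the subsequence $i\mapsto Y_{j(i)}$. The transition maps of $Y$ furnish a canonical morphism $s\colon Y\to Y\circ j$ which lies in $S$ (for every $C\in\C$ and $i$ large enough, both $\Hom(C,Y_i)$ and $\Hom(C,Y_{j(i)})$ equal their common stable value). Thus $s^{-1}\circ f$ is a morphism in $\Ind_\cau\C$, and unwinding the construction shows $\bar\Phi(s^{-1}\circ f)=\phi$.

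For faithfulness I would use the same normal form. Two morphisms $\alpha,\beta\colon X\to Y$ in $\Ind_\cau\C$ can, after passing to a common subsequence $Y\circ j$ via the construction above, be represented by honest natural transformations $f,g\colon X\to Y\circ j$. If $\bar\Phi(\alpha)=\bar\Phi(\beta)$, then for each $i$ the maps $f_i,g_i\colon X_i\to Y_{j(i)}$ agree in $\colim_k\Hom(X_i,Y_k)$; choosing a further subsequence $Y\circ k$ dominating $Y\circ j$, their compositions with the transitions $Y_{j(i)}\to Y_{k(i)}$ coincide. Since the morphism $Y\circ j\to Y\circ k$ lies in $S$, the two representatives become equal after inverting it, so $\alpha=\beta$ in $\Ind_\cau\C$.

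The main obstacle is the bookkeeping in the fullness step: going from a compatible family of elements in filtered colimits to a genuine natural transformation between sequences requires inductively choosing the indices $j(i)$ large enough both to lift the data and to realise the naturality equalities strictly. Everything else is a formal consequence of Lemma~\ref{le:Hom-completion} and the definition of $\Ind_\cau\C$ as a localisation.
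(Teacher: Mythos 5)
The paper itself offers no argument here (the proposition is quoted from \cite[Proposition~2.4]{Kr2020} with a \qed), so your proposal has to stand on its own. Your treatment of the descent of $\Phi$ through the localisation, of the essential image, and of fullness is essentially correct and is the right strategy: since $Y$ is Cauchy, the transition maps $\Hom(X_i,Y_j)\to\Hom(X_i,Y_{j+1})$ are bijective for $j\ge n_{X_i}$, so the canonical map $\Hom(X_i,Y_{j})\to\colim_k\Hom(X_i,Y_k)$ is bijective for large $j$; this uniqueness of representatives (not mere existence of lifts, which holds in any filtered colimit of sets and needs no Cauchy hypothesis) is what lets you choose $j(i)$ increasing with $j(i)\ge\max(i,n_{X_0},\dots,n_{X_i})$ and get strict naturality, and $j(i)\ge i$ is what makes the comparison $s\colon Y\to Y\circ j$ exist at all. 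All of that can be written out and works.

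The genuine gap is in faithfulness. A morphism $\alpha$ in $\Ind_\cau\C=\Cau(\bbN,\C)[S^{-1}]$ is by definition an equivalence class of arbitrary zigzags, and your opening claim --- that $\alpha$ and $\beta$ ``can, after passing to a common subsequence, be represented by honest natural transformations $f,g\colon X\to Y\circ j$ via the construction above'' --- is not justified by anything you proved. The construction above produces, for a given morphism $\phi$ in $\Ind\C$, \emph{some} fraction $s^{-1}f$ with $\bar\Phi(s^{-1}f)=\phi$; that is fullness. Asserting that a \emph{given} localisation morphism $\alpha$ with $\bar\Phi(\alpha)=\phi$ equals such a fraction is precisely what faithfulness demands, so as written the step is circular. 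What is missing is a structural fact about $S$ inside $\Cau(\bbN,\C)$: every eventually invertible $s\colon U\to V$ between Cauchy sequences becomes invertible up to subsequence --- using that $\Hom(V_i,U_{m(i)})\to\Hom(V_i,V_{m(i)})$ is bijective for $m(i)$ large, lift the transitions $V_i\to V_{m(i)}$ uniquely to $g_i\colon V_i\to U_{m(i)}$; uniqueness gives naturality and the identities saying $g\circ s$ and $(s\circ m)\circ g$ are the canonical transition morphisms. Equivalently, one must check that $S$ admits a calculus of left fractions on $\Cau(\bbN,\C)$. Only with such a lemma do arbitrary zigzags collapse to fractions $s^{-1}f$, can two fractions be brought to a common denominator $Y\circ j$, and does equality in the localisation reduce to ``$tf=tg$ for some $t\in S$'' --- at which point your stabilisation argument does finish the proof (and, as a byproduct, shows the localisation has small hom-sets, which is otherwise not clear). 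This lemma is the actual content of the result in \cite{Kr2020}, and your proposal treats it as bookkeeping; everything else in your write-up is sound.
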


It turns out that the class of Cauchy sequences is too restrictive; we need
a more general notion of completion.

\begin{defn}
  The \emph{sequential completion} of $\C$ has as objects all
  sequences in $\C$, i.e.\ functors $\bbN\to\C$, and for objects $X,Y$
  set
   \[\Hom(X,Y)=\lim_i\colim_j\Hom(X_i,Y_j).\]
   This category is denoted by $\Ind_\bbN\C$, and for any class $\X$
   of objects the corresponding full subcategory is denoted by
   $\Ind_\X\C$.
 \end{defn}
 It follows from Lemma~\ref{le:Hom-completion} that the assignment
$X\mapsto\colim_i\Hom(-,X_i)$ induces a fully faithful functor
$\Ind_\bbN\C\to\Ind\C$. Thus we obtain canonical inclusions
\[\Ind_\X\C\subseteq \Ind_\bbN\C\subseteq \Ind\C.\]

\begin{exm}\label{ex:countable-envelope}
 Let $\C$ be an exact
 category and let $\X$ denote the class of sequences $X$ such that each
$X_i\to X_{i+1}$ is an admissible monomorphism. Then
$\C\tilde{\phantom{e}}:=\Ind_\X\C$ admits a canonical exact
structure and is called \emph{countable envelope} of $\C$
\cite[Appendix~B]{Ke1990}.
\end{exm}

\section{Completion of rings and modules}

Let $A$ be an associative ring. We consider the category $\Mod A$ of right
$A$-modules and the following full subcategories:
\begin{align*}
  \mod A &= \text{finitely presented $A$-modules}\\
  \proj A &= \text{finitely generated projective $A$-modules}\\
  \noeth A &= \text{noetherian $A$-modules (satisfying the ascending 
              chain
              condition)}\\
  \art A &= \text{artinian $A$-modules (satisfying the descending 
            chain
            condition)}\\
  \fl A&=\art A \cap \noeth A= \text{finite length $A$-modules} 
\end{align*}

\begin{defn}
  For an ideal $I\subseteq A$ the \emph{$I$-adic completion} of $A$ is
  the limit \[\widehat A:=\lim_{n\ge 0}A/I^n.\] Similarly for an
  $A$-module $M$ one sets
\[\widehat M:=\lim_{n\ge 0} M/M I^n.\]
\end{defn}

Let us consider the special case that $A$ is a commutative
noetherian local ring and $I=\fm$ its unique maximal ideal. We denote by
$E=E(A/\fm)$ the injective envelope of the unique simple
$A$-module. Then $D=\Hom(-,E)$ yields the \emph{Matlis duality}
$\Mod A\to\Mod A$, satisfying
\[\Hom(M,DN)\cong\Hom(N,DM)
  \qquad (M,N\in\Mod A).\]
Note that $M\iso D^2M$ when $M$ has finite length; this is easily
checked by induction on the length of $M$. Thus $D$ induces an equivalence
\[(\fl A)^\op\longiso\fl A.\]
For $n\ge 0$ we write $E_n=\Hom(A/\fm^n,E)$ and note that
\[E=\bigcup_{n\ge 0}E_n.\] In fact, the module $E$ is artinian and
each submodule $E_n$ is of finite length. Thus
\[\Hom(E,E)\cong \Hom(\colim_{n\ge 0}E_n,E)\cong
  \lim_{n\ge 0}\Hom(E_n,E)\cong \lim_{n\ge 0}A/\fm^n=\widehat A.\]
In particular, each Matlis dual module $DM$ is canonically an
$\widehat A$-module via the map
$\widehat A\iso \End(E)$. Thus Matlis duality yields the
following commutative diagram.
\[\begin{tikzcd}
(\fl A)^\op\arrow["\sim" labl,"D" ']{d}\arrow[tail]{rr}&&(\art A)^\op \arrow["\sim" labl,"D" ']{d}\\
\fl A\arrow[tail]{r} &\noeth A\arrow{r}{\widehat{}}&\noeth\widehat A
\end{tikzcd}\]

Given a module $M$,  the
\emph{socle} $\soc M$ is the sum of all simple submodules. One
defines inductively $\soc^n M\subseteq M$ for $n\ge 0$ by setting $\soc^0 M=0$,
and $\soc^{n+1}M$ is given by the exact sequence
\[0\lto\soc^n M\lto\soc^{n+1} M\lto\soc (M/\soc^n M)\lto 0.\]

Recall that a ring $A$ is \emph{semi-local} if $A/J(A)$ is a
semisimple ring, where $J(A)$ denotes the Jacobson radical of $A$. In
that case we have $\soc M\cong\Hom(A/J(A),M)$ for every $A$-module
$M$.

\begin{prop}[{\cite[Proposition~3.4]{Kr2020}}]\label{pr:finite-length}
  Let $A$ be a commutative noetherian semi-local ring. Then the sequential
Cauchy  completion of $\fl A$ identifies with $\art A$.
\end{prop}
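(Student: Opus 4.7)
The plan is to construct an equivalence $F \colon \Ind_\cau(\fl A) \iso \art A$ given by $X \mapsto \colim_i X_i$, with inverse built from the socle filtration. Because Proposition~\ref{pr:completion} embeds $\Ind_\cau(\fl A)$ fully faithfully into $\Ind(\fl A)$, which (for $A$ commutative noetherian) identifies with the torsion $A$-modules, full faithfulness of $F$ is automatic once I show that $F$ takes values in $\art A$ and is essentially surjective. The anticipated main obstacle is the implication \emph{``a torsion module with finite length socle is artinian''}: this is the one step that genuinely uses the semi-local noetherian hypothesis and the Matlis duality framework just developed, while the rest is a direct interplay between the Cauchy condition and the socle filtration.

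For well-definedness, let $X$ be a Cauchy sequence and set $M := \colim_i X_i$. Since $A$ is semi-local noetherian, $A/J(A)$ is semisimple of finite length and $\soc = \Hom(A/J(A),-)$ preserves filtered colimits; hence $\soc M \cong \colim_i \soc X_i$. By the Cauchy condition applied to $C = A/J(A) \in \fl A$, the transition maps $\soc X_i \to \soc X_{i+1}$ are invertible for $i \gg 0$, so this colimit is isomorphic to $\soc X_i$ for $i$ large, which has finite length. Now $\soc M \subseteq M$ is an essential extension, because every nonzero element of a torsion module generates a finite length submodule that contains a simple. Hence $M$ embeds into the injective envelope of $\soc M$, which is a finite direct sum of modules of the form $E(A/\fm_i)$; each summand is artinian by Matlis duality, so $M$ is artinian.

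For essential surjectivity, let $M \in \art A$ and set $X_n := \soc^n M$. By induction each $X_n$ has finite length, since $\soc^{n+1} M/\soc^n M = \soc(M/\soc^n M)$ is semisimple artinian and over a semi-local ring such a module is of finite length. Over a semi-local noetherian ring every nonzero artinian module has nonzero socle, so $M = \bigcup_n X_n = \colim_n X_n$. The sequence is Cauchy: for $C \in \fl A$ of length $\ell$, any morphism $C \to M$ has image of length at most $\ell$, hence factors through $\soc^\ell M$; so $\Hom(C, X_n) \iso \Hom(C, M)$ for $n \ge \ell$. This gives $M \cong F(X)$, completing the proof.
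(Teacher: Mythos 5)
Your proof is correct, and its skeleton is the same as the paper's: both directions run through the fully faithful embedding $\Ind_\cau(\fl A)\subseteq\Ind(\fl A)\to\Mod A$ from Proposition~\ref{pr:completion}, both detect artinianness of $\colim_i X_i$ by stabilising $\soc X_i\cong\Hom(A/J(A),X_i)$ via the Cauchy condition at $C=A/J(A)$, and both realise an artinian module as the colimit of its socle series. Where you genuinely diverge is at the key characterisation ``a union of finite length modules whose socle has finite length is artinian'': the paper simply cites \cite[Proposition~2.4.20]{Kr2022}, whereas you prove it inside the notes --- the socle is essential since every nonzero element generates a finite length submodule, so $M$ embeds in $E(\soc M)$, a finite direct sum of copies of the $E(A/\fm_i)$, each artinian by the Matlis theory of \S 2. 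This buys self-containedness, at the small cost that \S 2 establishes artinianness of $E$ only for \emph{local} rings, so strictly you should add the routine reduction $E_A(A/\fm_i)\cong E_{A_{\fm_i}}(A_{\fm_i}/\fm_i A_{\fm_i})$ (or redo the computation semilocally). You also verify explicitly that the socle series is Cauchy --- the image of any $C\to M$ with $\length C=\ell$ has Loewy length at most $\ell$, hence lands in $\soc^\ell M$ --- a detail the paper asserts without proof; it is correct and worth having. One spot is too terse: justifying $M=\bigcup_n\soc^nM$ by ``every nonzero artinian module has nonzero socle'' does not by itself control the \emph{countable} union (a simple submodule of $M/\bigcup_n\soc^nM$ need not visibly lift into some finite stage without a further argument using that $J(A)$ is finitely generated). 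The clean fix is the same observation you already use for the Cauchy condition: each $x\in M$ generates a submodule that is both artinian and noetherian, hence of finite length $\ell$, hence of Loewy length at most $\ell$, so $x\in\soc^\ell M$.
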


\begin{proof}
  Set $\C=\fl A$.  The assignment
  $X\mapsto \bar X:=\colim_n X_n$ yields a fully faithful functor
  \[\Ind_\cau\C\subseteq\Ind\C\lto\Ind(\mod A)=\Mod A.\]
  
  It is well known that an $A$-module $M$ is artinian if and only if
  $M$ is the union of finite length modules and $\soc M$ has finite
  length \cite[Proposition~2.4.20]{Kr2022}.  In that case the socle
  series $(\soc^nM)_{n\ge 0}$ of $M$ yields a Cauchy sequence in $\C$
  with $\colim_n(\soc^nM)=M$.

  Now let $X\in\Ind_\cau\C$. Then every finitely generated submodule
  of $\bar X$ has finite length, so $\bar X$ is a union of finite
  length modules. Also $\soc\bar X$ has finite length,
  since
  \[\soc\bar X\cong\Hom(A/J(A),\bar X)\cong\colim_n\Hom(A/J(A),
    X_n).\] Thus $\bar X$ is artinian.
\end{proof}

\begin{rem}\label{re:ind-adic}
  Completions of modules or categories of modules will serve as model
  for completions of triangulated categories. We have seen two types
  of completions: the \emph{ind-completion} (the inclusion
  $\fl A\to\art A$) and the \emph{adic completion} (the functor
  $\noeth A\to\noeth\widehat A$). Both types of completions have their
  analogues when we consider triangulated categories.
\end{rem}

\section{Completion of triangulated categories}\label{se:completion}

We are ready to propose a definition of `completion' for a triangulated
category. Roughly speaking it is a triangulated approximation of the
ind-completion. In fact, it will be rare that the ind-completion of a
triangulated category admits a triangulated structure, but it does
happen that certain full subcategories are triangulated.

Let $\C$ be an essentially small triangulated category. We denote by
$\Coh\C$ the category of cohomological functors $\C^\op\to\Ab$. In
Example~\ref{ex:coh} we have already seen that $\Coh\C$ equals
$\Ind\C$. An exact functor $f\colon\C\to\D$ between 
triangulated categories induces the \emph{restriction}
\[f_*\colon \D\lto\Coh\C,\quad X\mapsto\Hom(-,X)\circ f.
\] 

\begin{defn}
  We call a fully faithful exact functor $f\colon\C\to\D$ a
  \emph{partial completion} of the triangulated category $\C$ if the
  restriction $f_*\colon\D\to\Coh\C$ is fully faithful.  The
  completion is called \emph{sequential} if the above functor factors
  through $\Ind_\bbN\C$, and it is \emph{Cauchy sequential} if the
  functor factors through $\Ind_\cau\C$.
\end{defn}

A partial completion of $\C$ is far from unique. But depending on the context
there are often natural choices. An essential feature of a partial completion
is the fact that any object in $\D$ can be written canonically as a
filtered colimit of objects in the image of $f$. Suppose for
simplicity that $f$ is an inclusion. Then we have for each object
$X\in\D$ an isomorphism
\[\colim_{C\to X}C\iso X\]
where $C\to X$ runs through all morphisms in $\D$ with $C\in\C$. This
follows from \eqref{eq:slice}. Moreover, using
Lemma~\ref{le:Hom-completion} we can compute morphisms in $\D$ via
\[\Hom(X,X')\cong\lim_{C\to X}\colim_{C'\to X'}\Hom(C,C').\]
If the completion is sequential, then there is for each object
$X\in\D$ a sequence $C_0\to C_1\to C_2\to\cdots$ in $\C$ such
that \[\colim_n C_n\iso X.\] We note that these filtered colimits are
taken in $\D$; they exist because $\D$ identifies with a full
subcategory of the ind-completion of $\C$.

 Examples of partial completions arise from derived categories of exact
 subcategories.  For an exact category $\A$ we write $\bfD(\A)$ for
 its derived category and $\bfD^b(\A)$ for the full subcategory of
 bounded complexes.

\begin{exm}
  For a commutative noetherian ring $A$ the inclusion
  $\bfD^b(\fl A)\to \bfD^b(\art A)$ is a sequential partial completion.
\end{exm}

Recall that a ring $A$ is \emph{right coherent} if the
category $\mod A$ of finitely presented $A$-modules is abelian.

\begin{exm}\label{ex:coherent}
  For a right coherent ring $A$ the inclusion
  $\bfD^b(\proj A)\to\bfD^b(\mod A)$ is a Cauchy sequential partial
  completion.
\end{exm}

The assumption on the ring $A$ to be right coherent is not essential.
For an arbitrary ring one takes instead of $\mod A$ the exact category
of $A$-modules $M$ that admit a projective resolution
\[\cdots \lto P_1\lto P_0\lto M\lto 0\] such that each $P_i$ is
finitely generated; it is the largest full exact subcategory of
$\Mod A$ containing $\proj A$ and having enough projective objects.

We will return to these examples and provide full proofs. In fact, the proofs
require the study of compactly generated triangulated categories.

\section{Completion of compact objects and pure-injectivity}\label{se:pure-inj}

Partial completions of an essentially small triangulated category $\C$ often
arise as full triangulated subcategories of a compactly generated
triangulated category $\T$ such that $\C$ equals the subcategory of
compact objects.

Let $\T$ be a triangulated category that admits arbitrary
coproducts. An object $X$ in $\T$ is called \emph{compact} if the
functor $\Hom(X,-)$ preserves all coproducts. We denote by $\T^c$
the full subcategory of compact objects and note that it is a thick
subcategory of $\T$. The triangulated category $\T$ is \emph{compactly
  generated} if $\T^c$ is essentially small and if $\T$ has no proper
localising subcategory containing $\T^c$.

From now on fix a compactly generated triangulated category $\T$ and set
$\C=\T^c$.

\begin{defn}
 The  functor
\[\T\lto \Coh\C,\quad X\mapsto
  h_X:=\Hom(-,X)|_\C\]
is called \emph{restricted Yoneda functor}. The  induced
map \[\Hom(X,Y)\lto\Hom(h_X,h_Y)\qquad (X,Y\in\T)\]
is in general neither injective nor surjective; its kernel is the
subgroup of \emph{phantom morphisms}.
\end{defn}

The category $\Coh\C$ is an extension closed subcategory of the
abelian category $\Mod\C$ (i.e.\ the category of additive functors
$\C^\op\to\Ab$). Thus it is an exact category (in the sense of
Quillen) with enough projective and enough injective objects. In fact,
the projective objects are of the form $h_X$ with $X$ a direct summand
of a coproduct of compact objects in $\T$; this follows from Yoneda's
lemma.  An application of Brown's representability theorem shows that
also the injective objects are of the form $h_X$ for an object $X$ in
$\T$. This leads to the notion of a pure-injective object.

\begin{defn}
An exact triangle $X\to Y\to Z\to$ in $\T$ is called \emph{pure-exact}
if the induced sequence $0\to h_X\to h_Y\to h_Z\to 0$ is exact. The
triangle \emph{splits} if $X\to Y$ is a split monomorphism,
equivalently if $Y\to Z$ is a split epimorphism. An object $X$ in $\T$
is \emph{pure-injective} if each pure-exact triangle $X\to Y\to Z\to$
in $\T$ splits.
\end{defn}

We have the following characterisation of pure-injectivity.

\begin{prop}[{\cite[Theorem~1.8]{Kr2000}}]\label{pr:pure-inj}
For an object $X$ in $\T$ the following are equivalent.
\begin{enumerate}
    \item  The map $\Hom(X',X)\to\Hom(h_{X'},h_{X})$ is
      bijective for all $X'\in\T$.
    \item The object $h_X$ is injective in $\Coh\C$.
    \item The object $X$ is pure-injective in $\T$.
    \item For each set $I$ the summation morphism $\coprod_I X\to X$
      factors through the canonical morphism
      $\coprod_I X\to \prod_IX$.  \qed
\end{enumerate}   
\end{prop}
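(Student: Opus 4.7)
I would prove $(1)\Leftrightarrow (2)$ together, and then close the cycle by establishing $(2)\Rightarrow (3)\Rightarrow (4)\Rightarrow (2)$. Two structural facts form the backbone: \emph{(a)} the restricted Yoneda functor $h$ sends pure-exact triangles in $\T$ bijectively to short exact sequences $0\to h_A\to h_B\to h_C\to 0$ in $\Coh\C$, and $\Coh\C$ has enough projectives of the form $h_Y$ with $Y\in\T$; \emph{(b)} every $X'\in\T$ is a sequential homotopy colimit of compacts, fitting in a triangle $\coprod_n X_n\xrightarrow{1-\mathrm{shift}}\coprod_n X_n\to X'\to$, so that $h_{X'}\cong\colim_n h_{X_n}$ in $\Coh\C$. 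Two consequences of (b) I will use repeatedly: the Milnor exact sequence
\[0\lto{\lim}^1\Hom(\Si X_n,X)\lto\Hom(X',X)\lto\Hom_{\Coh\C}(h_{X'},h_X)\lto 0,\]
and, via the projective resolution $0\to\coprod_n h_{X_n}\to\coprod_n h_{X_n}\to h_{X'}\to 0$ in $\Coh\C$, the identification ${\lim}^1\Hom(\Si X_n,X)\cong\Ext^1_{\Coh\C}(h_{X'},h_X)$.

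For $(1)\Rightarrow (2)$, to extend a map $F\to h_X$ along a mono $F\hookrightarrow G$ in $\Coh\C$, I would resolve by representables via (a) to reduce to a pure-exact triangle $A\to B\to C\to$ in $\T$; (1) converts the data into $A\to X$ in $\T$, and the long exact sequence from the triangle provides the extension $B\to X$ once one observes that the connecting composite $\Si^{-1}C\to A\to X$ vanishes (already verifiable in $\Coh\C$). For $(2)\Rightarrow (1)$, combine the Milnor sequence with $\Ext^1_{\Coh\C}(h_{X'},h_X)=0$ (injectivity of $h_X$) to force ${\lim}^1\Hom(\Si X_n,X)=0$, so $\Hom(X',X)\iso\Hom_{\Coh\C}(h_{X'},h_X)$.

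For $(2)\Rightarrow (3)$, a pure-exact triangle $X\xrightarrow{i} Y\to Z\to$ yields an exact $0\to h_X\to h_Y\to h_Z\to 0$ that splits by injectivity; the splitting $\bar r\colon h_Y\to h_X$ lifts by the already established (1) to $r\colon Y\to X$, and $r\circ i=\id_X$ because $h_{r\circ i}=\id_{h_X}=h_{\id_X}$ and $h$ is faithful on $\Hom(X,X)$ by (1). For $(3)\Rightarrow (4)$, the canonical map $\iota\colon\coprod_I X\to\prod_I X$ is a pure monomorphism since for each compact $C$ the induced map is the inclusion $\coprod_I\Hom(C,X)\hookrightarrow\prod_I\Hom(C,X)$; completing to a pure-exact triangle and splitting by pure-injectivity produces a retraction $\rho\colon\prod_I X\to\coprod_I X$, and $\mathrm{sum}\circ\rho$ is the desired factorization of summation through the product.

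The heart of the proof---and the main obstacle---is $(4)\Rightarrow (2)$, equivalently (via the identifications above) the vanishing ${\lim}^1\Hom(\Si X_n,X)=0$ for every sequence of compacts. Given a $1$-cocycle $(g_n)\in\prod_n\Hom(\Si X_n,X)$, the ``tail sum'' $\sum_{k\ge n}g_k$ does not exist as a morphism to $X$, but it does exist as the canonical morphism $\Si X_n\to\prod_{k\ge n}X$ assembling the $g_k$; post-composing with the factorization $\sigma\colon\prod_I X\to X$ of summation provided by (4) (with $I=\bbN$) produces morphisms $f_n\colon\Si X_n\to X$ whose successive differences $f_n-f_{n+1}|_{\Si X_n}$ should recover $g_n$, exhibiting $(g_n)$ as a coboundary. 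The delicate point, and the step most likely to cause trouble, is verifying that $\sigma$ interacts compatibly with the truncations $\prod_{k\ge n}X\to\prod_{k\ge n+1}X$ so that the telescoping identity really holds; this is a small Mittag--Leffler-style calculation and is precisely the reason property (4) is strong enough to imply the other three.
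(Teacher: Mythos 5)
Your cycle $(2)\Rightarrow(3)\Rightarrow(4)$ is correct, and your telescoping argument that (4) forces ${\lim}^1_n\Hom(\Si X_n,X)=0$ for a sequence of compacts is a genuinely nice computation: the difference $f_n-f_{n+1}\circ\Si\phi_n$ is $\sigma$ composed with the inclusion of the $n$-th factor of the product, which factors through the coproduct, where $\sigma$ restricts to the identity; so the cocycle is a coboundary, and no delicate Mittag--Leffler issue remains. The fatal problem is your backbone fact \emph{(b)}, on which both $(2)\Rightarrow(1)$ and $(4)\Rightarrow(2)$ rest: in a general compactly generated category it is \emph{false} that every object $X'$ is a sequential homotopy colimit of compacts, or even of coproducts of compacts. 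If it were, every $h_{X'}$ would admit a two-term projective resolution $0\to\coprod_n h_{X_n}\to\coprod_n h_{X_n}\to h_{X'}\to 0$, and your Milnor sequence would make the restricted Yoneda functor full with kernel the phantoms for \emph{all} objects --- precisely the Brown--Adams representability statements that Christensen, Keller, and Neeman showed fail (e.g.\ in $\bfD(\Mod A)$ for $A=k[x,y]$ with $k$ of large cardinality; such towers exist exactly when the pure global dimension is at most $1$, which holds when $\T^c$ is countable but not in general). For the literal claim ``hocolim of compacts'' there is an even simpler obstruction: in $\bfD(\Mod k)$, $k$ a field, a homotopy colimit of a sequence of compacts has countable-dimensional cohomology by Lemma~\ref{le:hocolim}.

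There is a second, related gap: even where such towers exist, the vanishing of $\Ext^1_{\Coh\C}(h_{X'},h_X)$ for those $X'$ does not yield injectivity of $h_X$ in $\Coh\C$, because a general object of $\Coh\C$ is a filtered colimit of representables over an arbitrary (not sequential) filtered category and, by the same failure of Adams representability, need not be of the form $h_{X'}$ at all. This also undercuts your sketch of $(1)\Rightarrow(2)$, where you assume an admissible mono $F\hookrightarrow G$ in $\Coh\C$ can be traded for a pure-exact triangle in $\T$; the natural transformations involved need not be induced by morphisms of $\T$, which is circular with what is being proved. The paper itself gives no proof (it cites \cite[Theorem~1.8]{Kr2000}), and the cited argument is structured to avoid exactly these traps: it works in the ambient abelian category $\Mod\C$, where injectivity of $h_X$ in $\Coh\C$ is tested against \emph{finitely presented} functors (a Baer-type, fp-injectivity reduction), and these always have presentations $h_{C'}\to h_C\to F\to 0$ with $C,C'$ compact; condition (4) enters through the classical Jensen--Lenzing criterion for pure-injectivity, and $(2)\Rightarrow(1)$ is obtained from Brown representability applied to the functor $\Hom(h_{(-)},h_X)$, which is cohomological precisely because $h_X$ is fp-injective. (A small further slip, harmless once suspensions are tracked: your resolution computes $\Ext^1_{\Coh\C}(h_{X'},h_X)\cong{\lim}^1_n\Hom(X_n,X)$, whereas the Milnor kernel is ${\lim}^1_n\Hom(\Si X_n,X)$.) Your telescoping computation can survive as the special case that (4) kills phantom maps out of countably presented objects, but it cannot carry the full equivalence.
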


The following immediate consequence motivates our interest in
pure-injectives.

\begin{cor}\label{co:pure-inj}
  Let $\D\subseteq\T$ be a triangulated subcategory containing all
  compact objects and consisting of pure-injective objects. Then the
  inclusion $\C\to\D$ is a partial completion.\qed
\end{cor}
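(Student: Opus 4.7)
The plan is to unwind the definition of a partial completion and check each condition, observing that the only substantive one reduces immediately to Proposition~\ref{pr:pure-inj}. Since $\D$ is assumed to be a triangulated subcategory of $\T$ containing $\C=\T^c$, the inclusion $\C\hookrightarrow\D$ is automatically fully faithful and exact. What remains is to show that the restriction functor
\[
f_*\col\D\lto\Coh\C,\quad X\longmapsto h_X=\Hom(-,X)|_\C
\]
is fully faithful, i.e.\ that for all $X,Y\in\D$ the natural map
\[
\Hom_\D(X,Y)=\Hom_\T(X,Y)\lto\Hom_{\Coh\C}(h_X,h_Y)
\]
is a bijection.

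The key step is to apply Proposition~\ref{pr:pure-inj} to the target object $Y$. By hypothesis every object of $\D$ is pure-injective in $\T$, so in particular $Y$ is pure-injective. Condition (3) of Proposition~\ref{pr:pure-inj} is thus satisfied by $Y$, and the equivalence with condition~(1) tells us that the map $\Hom_\T(X',Y)\to\Hom(h_{X'},h_Y)$ is bijective for \emph{every} $X'\in\T$. Specialising to $X'=X\in\D\subseteq\T$ gives the required bijection.

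There is really no obstacle here beyond correctly parsing which variable the pure-injectivity assumption is being used on: the hypothesis on $\D$ is what makes the \emph{target} $Y$ pure-injective, while the \emph{source} $X$ is allowed to be an arbitrary object of $\T$ (in our case of $\D$). Since Proposition~\ref{pr:pure-inj} is stated precisely in this asymmetric form, the corollary follows with no further work. Thus the inclusion $\C\to\D$ is a partial completion in the sense of \S\ref{se:completion}.
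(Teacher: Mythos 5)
Your proof is correct and is exactly the argument the paper intends: the corollary carries a \qed precisely because full faithfulness of the restriction $f_*\colon\D\to\Coh\C$ is immediate from the equivalence of conditions (1) and (3) in Proposition~\ref{pr:pure-inj}, applied to each (pure-injective) target object $Y\in\D$ with the source ranging over all of $\T$. Your remark that the pure-injectivity hypothesis is used only on the target, exactly matching the asymmetric form of the proposition, is the right reading and requires no further work.
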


Pure-injectivity is a useful homological condition but in practice
hard to check. In particular, there is no obvious triangulated
structure on pure-injectives.  We provide a criterion for a strong
form of pure-injectivity which is an analogue of artinianess for
modules.

Let $X\in\T$ and $C\in\T^c$. A \emph{subgroup of finite definition} is
a subgroup of $\Hom(C,X)$ that equals the image of an induced map
$\Hom(D,X)\to\Hom(C,X)$ given by a morphism $C\to D$ in $\T^c$. Note
that any subgroup of finite definition of $\Hom(C,X)$ is an 
$\End(X)$-submodule.  

\begin{lem}
  The subgroups of finite definition of $\Hom(C,X)$ are closed under
  finite sums and intersections. Thus they form a lattice.
\end{lem}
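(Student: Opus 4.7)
The plan is to realize both operations as images of the pullback map induced by a suitably chosen morphism in $\C = \T^c$. Fix $f\colon C\to D$ and $g\colon C\to D'$ in $\C$, and let $U=\Im(f^*)$ and $V=\Im(g^*)$ denote the corresponding subgroups of finite definition inside $\Hom(C,X)$. Since $\C$ is thick in $\T$, it is closed under finite coproducts, shifts, and cones, so any auxiliary object we build from $f$ and $g$ automatically lies in $\C$.

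For closure under sums, I would take the morphism $(f,g)\colon C\to D\oplus D'$ in $\C$. The induced map $\Hom(D\oplus D',X)\to\Hom(C,X)$ sends $(h_1,h_2)$ to $h_1f+h_2g$, so its image is exactly $U+V$. This already shows that $U+V$ is a subgroup of finite definition.

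For closure under intersections, the key observation is that a morphism $\phi\colon C\to X$ factors through $f$ if and only if $\phi\circ\partial=0$, where $\partial\colon \Sigma^{-1}C_f\to C$ is the connecting morphism in the triangle $\Sigma^{-1}C_f\xrightarrow{\partial}C\xrightarrow{f}D\to C_f$; this is immediate from the long exact sequence obtained by applying $\Hom(-,X)$. Let $\partial'\colon\Sigma^{-1}C_g\to C$ be the analogous morphism for $g$. Then $\phi\in U\cap V$ precisely when $\phi$ annihilates the combined morphism
\[
(\partial,\partial')\colon \Sigma^{-1}(C_f\oplus C_g)\lto C.
\]
Completing this to a triangle
\[
\Sigma^{-1}(C_f\oplus C_g)\xrightarrow{(\partial,\partial')}C\xrightarrow{h}E\lto C_f\oplus C_g
\]
inside $\C$ produces a morphism $h\colon C\to E$ whose induced map $h^*\colon\Hom(E,X)\to\Hom(C,X)$ has image exactly $U\cap V$.

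The only real obstacle is the intersection argument, and it is resolved by the standard trick of encoding "factors through $f$" as the vanishing condition against a single morphism out of a compact object, then combining the two vanishing conditions into a single one via a direct sum. Closure of $\C$ under cones then delivers the required compact object $E$. Everything else is routine manipulation of the long exact sequences coming from $\Hom(-,X)$.
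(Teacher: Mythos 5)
Your proof is correct, and your sum argument is exactly the paper's: both realise $U+V$ as the image of the map induced by $(f,g)\colon C\to D\oplus D'$. For the intersection, however, you take a genuinely different route. The paper reuses the very same morphism: it completes $C\xrightarrow{(f,g)}D\oplus D'$ to an exact triangle $C\to D\oplus D'\to E\to$ and asserts that the composite $C\xrightarrow{f}D\to E$ induces a map whose image is $U\cap V$; verifying this requires checking two inclusions (one direction uses that the composite $C\to D\oplus D'\to E$ vanishes, so a morphism factoring through $C\to D\to E$ also factors through $g$ up to sign; the other uses exactness of $\Hom(E,X)\to\Hom(D\oplus D',X)\to\Hom(C,X)$ applied to a pair $(h_1,-h_2)$ with $h_1f=h_2g$). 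You instead first rotate each defining triangle to convert the image condition into a kernel condition --- $\phi$ factors through $f$ iff $\phi\partial=0$, where $\partial\colon\Sigma^{-1}C_f\to C$ is the connecting morphism --- so that $U\cap V$ becomes the kernel of the single map $(\partial,\partial')^*$, and then one application of $\Hom(-,X)$ to the triangle on $(\partial,\partial')$ identifies this kernel as the image of $h^*\colon\Hom(E,X)\to\Hom(C,X)$, with no inclusion-chasing: both containments are automatic from exactness. What the paper's version buys is economy (a single auxiliary cone, built on the map already used for sums); what yours buys is a cleaner verification, plus it makes explicit a useful general fact implicit in the paper, namely that subgroups of finite definition can equivalently be described as kernels of maps $\Hom(C,X)\to\Hom(C',X)$ induced by morphisms $C'\to C$ in $\T^c$, the image and kernel descriptions being interchangeable by rotating triangles. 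Your appeal to thickness of $\T^c$ to keep $C_f$, $C_g$, $\Sigma^{-1}(C_f\oplus C_g)$, and $E$ compact is also correct (closure under cones, shifts, and finite coproducts already follows from $\T^c$ being a triangulated subcategory).
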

\begin{proof}
  Let $U_i$ be the image of $\Hom(D_i,X)\to\Hom(C,X)$ given by a
  morphism $C\to D_i$ in $\T^c$ ($i=1,2$). Then $U_1+U_2$ equals the image of the
  map induced by $C\to D_1\oplus D_2$. Now complete this to an exact
  triangle $C\to D_1\oplus D_2\to E\to$. Then $U_1\cap U_2$ equals the
  image of the map induced by $C\to D_1\to E$.
\end{proof}

We say that an object $X\in\T$ satisfies \emph{dcc on subgroups of
  finite definition} if for each compact object $C$ any chain of
subgroups of finite definition
\[\cdots \subseteq U_2 \subseteq U_1 \subseteq U_0=\Hom(C,X)\]
stabilises.

The following result goes back to Crawley-Boevey \cite[3.5]{CB1994}, who
proved this for locally finitely presented additive categories; see
also \cite[Theorem~12.3.4]{Kr2022}. The proof is quite involved; the
basic idea is to translate the descending
chain condition  into a noetherianess condition for some
appropriate Grothendieck category (a localisation of $\Mod\C$
cogenerated by $h_X$).

\begin{prop}\label{pr:dcc}
  For an object $X\in\T$ the following are equivalent.
  \begin{enumerate}
    \item $X$ is $\Sigma$-pure-injective, i.e.\ any coproduct
      of copies of $X$ is pure-injective.
      \item $X$ satisfies dcc
        on subgroups of finite definition.
      \item Every product of copies of $X$ decomposes into a coproduct
        of indecomposable objects with local endomorphism rings.
      \end{enumerate}
\end{prop}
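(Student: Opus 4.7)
The plan is to follow Crawley-Boevey's strategy: set up a single Grothendieck category $\G$ attached to $X$, obtained as a localisation of $\Mod\C$, and show that all three conditions are equivalent to $\G$ being locally noetherian.

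First I would set up the localisation. Inside $\Mod\C$, let $\calS_X$ be the full subcategory of functors $F$ such that every morphism from $F$ to any object of $\Add h_X$ vanishes; using Proposition~\ref{pr:pure-inj} one checks that $\calS_X$ is a localising subcategory, so the quotient $\G:=\Mod\C/\calS_X$ is a Grothendieck category in which $h_X$ descends to an injective cogenerator. The central technical dictionary is to identify the lattice of finite-definition subgroups of $\Hom(C,X)$ with the lattice of subobjects of the image of $h_C$ in $\G$: a subgroup $\Im(\Hom(D,X)\to\Hom(C,X))$ corresponds to the image of $h_C\to h_D$ computed in $\G$, and conversely every subobject of the image of $h_C$ arises this way, because morphisms $C\to D$ in $\C$ generate the subobject lattice after localisation.

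With the dictionary in hand, each of (1), (2), (3) translates to local noetherianness of $\G$. Condition (2) says this directly, since noetherianness of the subobjects of each image of $h_C$ in $\G$ is precisely the dcc on finite-definition subgroups of $\Hom(C,X)$. Condition (1) is equivalent to every coproduct $h_{X^{(I)}}=\coprod_I h_X$ being injective in $\G$, i.e.\ $h_X$ being $\Sigma$-injective; since $h_X$ is an injective cogenerator, Faith's theorem identifies $\Sigma$-injectivity with local noetherianness. Condition (3) is equivalent to the injective hull (in $\G$) of $h_{X^{(I)}}$ decomposing into a coproduct of indecomposable injectives with local endomorphism rings, which by Matlis' theorem is again equivalent to local noetherianness.

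The implications then run through these translations. For (2)$\Rightarrow$(1) and (2)$\Rightarrow$(3), apply the classical facts for locally noetherian Grothendieck categories together with Proposition~\ref{pr:pure-inj} to transport decompositions and injectivity back from $\G$ to $\T$ along the fully faithful restricted Yoneda functor. For (1)$\Rightarrow$(2), extract from a strictly descending chain $U_0\supsetneq U_1\supsetneq\cdots$ witnesses $u_n\in U_n\setminus U_{n+1}$ factoring through compacts $C\to D_n$, and assemble them into a morphism whose failure to stabilise produces a nonsplit pure-exact triangle ending in a coproduct of copies of $X$, contradicting (1). For (3)$\Rightarrow$(2), combine the indecomposable decomposition of each $\prod_I X$ with Azumaya's theorem applied to $\coprod_I X\hookrightarrow\prod_I X$ to deduce that $h_X$ is $\Sigma$-injective in $\G$, hence $\G$ is locally noetherian.

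The main obstacle is the construction of $\G$ and the verification of the dictionary between finite-definition subgroups of $\Hom(C,X)$ and subobjects of the image of $h_C$ in $\G$: one must show that $\calS_X$ is localising, that the quotient is genuinely Grothendieck with $h_X$ an injective cogenerator, and, most delicately, that every subobject of the image of $h_C$ arises from a morphism $C\to D$ in $\C$. Once this dictionary is in place the three equivalences run on standard ingredients (Faith's theorem on $\Sigma$-injectivity, Matlis decomposition, and Proposition~\ref{pr:pure-inj}), with the remaining technical effort concentrated in the construction of the non-split pure-exact triangle needed for the implication (1)$\Rightarrow$(2).
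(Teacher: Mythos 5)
Your plan is not the paper's proof: the paper disposes of this proposition in a few lines by transporting the three conditions along the restricted Yoneda functor and citing Crawley-Boevey. Concretely, it observes that $\Coh\C$ is a locally finitely presented additive category with products, that $X$ is pure-injective in $\T$ if and only if $h_X$ is pure-injective in $\Coh\C$ (Theorem~1 of \cite[3.5]{CB1994} combined with Proposition~\ref{pr:pure-inj}), and that the finite-definition subgroup lattices match because $\Hom(C,X)\cong\Hom(h_C,h_X)$ and the restricted Yoneda functor preserves coproducts on compacts; then Theorem~2 of \cite[3.5]{CB1994} gives all three equivalences at once. What you propose instead is to re-prove Crawley-Boevey's theorem itself, via the localisation of $\Mod\C$ cogenerated by $h_X$ --- exactly the route the paper describes as ``the basic idea'' of the cited proof but deliberately does not reproduce. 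That would be legitimate if the sketch were sound, but the steps you yourself flag as ``the main obstacle'' are precisely where it fails as stated.

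The concrete gaps: (a) Your $\calS_X=\{F\mid\Hom(F,\Add h_X)=0\}$ is closed under quotients, extensions and coproducts, but closure under subobjects requires extending morphisms $F'\to h_X$ from a subobject $F'\subseteq F$ to $F$, i.e.\ injectivity of $h_X$ in $\Mod\C$ --- which by Proposition~\ref{pr:pure-inj} is equivalent to pure-injectivity of $X$, a \emph{conclusion} of the proposition, not available at the outset under hypothesis (2) or (3). The repair is the finite-type localisation of the locally coherent category $\Mod\C$ at the Serre subcategory of finitely presented $F$ with $\Hom(F,h_X)=0$; there one uses that $h_X$ is always fp-injective (every cohomological functor is, via triangles) to make $h_X$ injective in the quotient. (b) The dictionary is misstated: the subgroup $\Im(\Hom(D,X)\to\Hom(C,X))$ consists of the maps $h_C\to h_X$ vanishing on $K=\Ker(h_C\to h_D)$, so it corresponds \emph{order-reversingly} to the (saturation of the) finitely generated subobject $K\subseteq h_C$; your ``image of $h_C\to h_D$'' is a subobject of $h_D$, not of $h_C$, and dcc on subgroups becomes \emph{acc} on finitely generated subobjects, from which local noetherianness still needs the standard fg-suffices argument. (c) The claim that \emph{every} subobject of the image of $h_C$ in $\G$ arises from a morphism $C\to D$ is false: only finitely generated subobjects do (arbitrary ones are directed unions), and asserting it outright presupposes the noetherianness being proved. (d) The genuinely hard implications --- the Zimmermann-style construction of a non-split pure-exact triangle for (1)$\Rightarrow$(2), and extracting pure-injectivity from the product decompositions in (3) before knowing $h_X$ is injective anywhere --- are gestured at, not carried out; these are the substance of Crawley-Boevey's Theorem~2, which the paper cites precisely to avoid re-proving them. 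As it stands, your proposal is a plausible programme for re-deriving the cited theorem with its hardest steps still open, not a proof.
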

\begin{proof}
  The category $\Coh\C$ is locally finitely presented and has
  products; so Crawley-Boevey's theory of purity can be applied. In particular,
  $X$ is pure-injective in $\T$ if and only if $h_X$ is pure-injective
  in $\Coh\C$, by Theorem~1 in \cite[3.5]{CB1994} and
  Proposition~\ref{pr:pure-inj}. Also, $h_X$ satisfies dcc on
  subgroups of finite definition in $\Coh\C$ if and only if $X$
  satisfies dcc on subgroups of finite definition in $\T$, as
  $\Hom(C,X)\cong\Hom(h_C,h_X)$ for each compact object $C$. Now the
  assertion follows from Theorem~2 in \cite[3.5]{CB1994}.
\end{proof}

Let $R$ be a commutative ring and suppose that $\C$ is
\emph{$R$-linear}. This means that there is a ring homomorphism
$R\to Z(\C)$ into the \emph{centre} of $\C$ (the ring of natural
transformations $\id_\C\to\id_\C$). In particular, for each pair of objects
$X,Y$ the group of morphisms $\Hom(X,Y)$ is naturally an
$R$-module.

We view $\C$ as a full subcategory of $\Coh\C$ and call an object $X$
in $\Coh\C$ or $\T$ \emph{artinian} over $R$ if $\Hom(C,X)$ is an
artinian $R$-module for all $C\in\C$ (via the canonical homomorphism
$R\to\End(C)$). Let $\art_R\C$ denote the full subcategory of
$R$-artinian objects in $\Coh\C$, and $\art_R\T$ denotes the full subcategory of
$R$-artinian objects in $\T$. One may drop $R$ when $R=Z(\C)$.

\begin{cor}
  The restricted Yoneda functor induces an equivalence
  $\art_R\T\iso\art_R\C$. In particular, the category $\art_R\C$
  admits a canonical triangulated structure that is induced from that
  of $\T$.
\end{cor}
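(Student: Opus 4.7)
The plan is to identify $R$-artinianness with a strong form of pure-injectivity via Proposition~\ref{pr:dcc}, after which the equivalence drops out of Proposition~\ref{pr:pure-inj} together with the fact, recalled above, that injectives of $\Coh\C$ are precisely the functors $h_X$ with $X\in\T$ pure-injective. The key preliminary observation is that for every $X\in\T$ and $C\in\C$ the group $\Hom_\T(C,X)$ is naturally an $R$-module via $R\to Z(\C)\to\End(C)$ acting by precomposition, and that each subgroup of finite definition is an $R$-submodule: given $\alpha\col C\to D$ in $\C$ and $r\in R$, centrality gives $g\circ\alpha\circ r_C=g\circ r_D\circ\alpha=(r\cdot g)\circ\alpha$, which still lies in the image of $\Hom(D,X)\to\Hom(C,X)$. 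Hence $X\in\art_R\T$ forces $\Hom(C,X)$ to satisfy dcc on subgroups of finite definition, so Proposition~\ref{pr:dcc} shows that $X$ is $\Sigma$-pure-injective, in particular pure-injective. Proposition~\ref{pr:pure-inj}(1) then yields that $\Hom_\T(X',X)\to\Hom(h_{X'},h_X)$ is bijective for every $X'\in\T$, so the restricted Yoneda functor is fully faithful on $\art_R\T$.

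For essential surjectivity, let $F\in\art_R\C$. The same observation, transported into the locally finitely presented additive category $\Coh\C$ (whose finitely presented objects are the representables $h_C$, with $\Hom(h_C,F)\cong F(C)$), shows that the subgroups of finite definition of $F$ are $R$-submodules, so $R$-artinianness of each $F(C)$ forces dcc on them. Applying Crawley--Boevey's theorem to $\Coh\C$, as in the proof of Proposition~\ref{pr:dcc}, we conclude that $F$ is pure-injective in $\Coh\C$, hence injective in $\Coh\C$, and hence of the form $h_X$ for some pure-injective $X\in\T$. Since $\Hom(C,X)=F(C)$ is $R$-artinian for every $C\in\C$, the object $X$ lies in $\art_R\T$, and this completes the equivalence.

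For the final assertion it suffices to check that $\art_R\T$ is a triangulated subcategory of $\T$; the equivalence then transports the triangulated structure to $\art_R\C$. Closure under shifts is immediate from the thickness of $\C$ and the isomorphism $\Hom(C,\Sigma X)\cong\Hom(\Sigma^{-1}C,X)$. Closure under extensions follows by applying $\Hom(C,-)$ to a triangle $X\to Y\to Z\to\Sigma X$ with $X,Z\in\art_R\T$: the cohomological long exact sequence exhibits $\Hom(C,Y)$ as an extension of a submodule of $\Hom(C,Z)$ by a quotient of $\Hom(C,X)$, both of which are $R$-artinian.

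The main obstacle is the essential surjectivity step, where one must perform Crawley--Boevey's analysis \emph{inside} $\Coh\C$ rather than inside $\T$ in order to convert the abstract artinianness of a cohomological functor into the injectivity required to realise it as $h_X$; everything else is a formal consequence of the results established above.
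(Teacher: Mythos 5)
Your proposal is correct and takes essentially the same route as the paper: Proposition~\ref{pr:dcc} (via dcc on subgroups of finite definition) gives pure-injectivity of $R$-artinian objects in $\T$ so that Proposition~\ref{pr:pure-inj} yields fully faithfulness, while essential surjectivity is obtained by running Crawley--Boevey's theorem inside $\Coh\C$ to see that an $R$-artinian functor is pure-injective, hence injective (the exact structure on $\Coh\C$ being the pure-exact one), hence of the form $h_X$, and finally transport of structure along the equivalence. The only difference is that you spell out two details the paper leaves implicit, namely the centrality computation showing that subgroups of finite definition are $R$-submodules, and the long-exact-sequence argument that $\art_R\T$ is a triangulated subcategory of $\T$.
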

\begin{proof}
 The restricted Yoneda functor induces  for $X\in\T$ and $C\in\C$ a bijection
  \[\Hom(C,X)\iso \Hom(h_C,h_X).\]
  Thus $X$ is $R$-artinian if and only if $h_X$ is $R$-artinian.  Now
  suppose that $X\in\Coh\C$ is $R$-artinian.  One has for $X\in\Coh\C$
  and $C\in\C$ the analogous concept of a subgroup of finite
  definition of $\Hom(C,X)$, and then artinianess over $R$ implies dcc
  on subgroups of finite definition. Thus it follows from Theorem~2 in
  \cite[3.5]{CB1994} that $X$ is pure-injective. The exact structure
  on $\Coh\C$ agrees with the pure-exact structure. Thus $X$ is an
  injective object and therefore of the form $h_{\bar X}$ for a
  pure-injective object $\bar X$ in $\T$ by
  Proposition~\ref{pr:pure-inj}. Clearly, $\bar X$ is $R$-artinian.

  Any $R$-artinian object $X\in\T$ is pure-injective by
  Proposition~\ref{pr:dcc}. Thus Proposition~\ref{pr:pure-inj} yields
  a bijection
 \[\Hom(X',X)\iso \Hom(h_{X'},h_X)\] for any pair $X,X'$ of
 $R$-artinian objects in $\T$.

 The $R$-artinian objects in $\T$ form a thick subcategory. Thus
 transport of structure provides a triangulated structure for
 $\art_R\C$.
\end{proof}

\begin{cor}\label{co:artinian}
  Let $\D\subseteq\T$ be a triangulated subcategory containing all
  compact objects and consisting of $R$-artinian objects. Then the
  inclusion $\C\to\D$ is a partial completion.
\end{cor}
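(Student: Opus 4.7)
The plan is to reduce Corollary~\ref{co:artinian} directly to Corollary~\ref{co:pure-inj}. The two statements have identical form except that ``consisting of pure-injective objects'' has been replaced by ``consisting of $R$-artinian objects'', so it suffices to verify that every $R$-artinian object of $\T$ is pure-injective; once this is done, the hypotheses of the previous corollary are met and nothing further is needed.

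To establish this, I would pass through the chain-condition characterisation of $\Sigma$-pure-injectivity given in Proposition~\ref{pr:dcc}. Fix $X \in \D$ and $C \in \C$. By assumption $\Hom(C,X)$ is an artinian $R$-module. As noted in the text, every subgroup of finite definition of $\Hom(C,X)$ is in fact an $\End(X)$-submodule; since the $R$-action on $\Hom(C,X)$ factors through post-composition by elements of $\End(X)$ (via the map $R\to Z(\C)$, applied at $X$), any such subgroup is automatically an $R$-submodule. Consequently, any descending chain of subgroups of finite definition is a descending chain of $R$-submodules inside the artinian $R$-module $\Hom(C,X)$, and therefore stabilises. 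Hence $X$ satisfies dcc on subgroups of finite definition, and Proposition~\ref{pr:dcc} yields that $X$ is $\Sigma$-pure-injective, in particular pure-injective.

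With every object of $\D$ now known to be pure-injective, Corollary~\ref{co:pure-inj} applies to $\C \subseteq \D \subseteq \T$ and delivers the conclusion. There is no genuine obstacle here: the entire content has been concentrated in Proposition~\ref{pr:dcc}, which we invoke as a black box. The only small point that deserves attention is the verification that ``$\End(X)$-submodule'' implies ``$R$-submodule'', but this is immediate from the fact that the $R$-action is central and so realised by post-composition with endomorphisms of $X$.
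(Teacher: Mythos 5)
Your overall route is exactly the paper's: the published proof is a one-liner reducing to Corollary~\ref{co:pure-inj} via the observation that every object of $\D$ is $\Sigma$-pure-injective by Proposition~\ref{pr:dcc}. Your proposal follows this reduction faithfully and, usefully, fills in the step the paper leaves implicit, namely that $R$-artinianness of $\Hom(C,X)$ forces the dcc on subgroups of finite definition.

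There is, however, one slip in that filled-in step. You justify that subgroups of finite definition are $R$-submodules by claiming the $R$-action on $\Hom(C,X)$ is realised by post-composition with endomorphisms of $X$ ``via the map $R\to Z(\C)$, applied at $X$''. But $X\in\D$ need not be compact, and $R\to Z(\C)$ only supplies endomorphisms of objects of $\C=\T^c$; there is no canonical element of $\End(X)$ attached to $r\in R$. Indeed the paper defines the $R$-module structure on $\Hom(C,X)$ via the canonical homomorphism $R\to\End(C)$, i.e.\ by \emph{pre}composition with $r_C$. So the observation that subgroups of finite definition are $\End(X)$-submodules does not by itself give what you need. The repair is short: if $U\subseteq\Hom(C,X)$ is the image of $\Hom(D,X)\to\Hom(C,X)$ induced by $f\colon C\to D$ in $\T^c$, then naturality of the central element gives $f\circ r_C=r_D\circ f$, hence for $g\in\Hom(D,X)$ one has $(g\circ f)\circ r_C=(g\circ r_D)\circ f\in U$, so $U$ is closed under the precomposition $R$-action. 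With this substitution your argument is complete and agrees with the paper's intent; note also that Proposition~\ref{pr:dcc} gives $\Sigma$-pure-injectivity, of which plain pure-injectivity is the special case you use.
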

\begin{proof}
The assertion is an immediate consequence of
Corollary~\ref{co:pure-inj} since each object in $\D$ is
$\Sigma$-pure-injective thanks to Proposition~\ref{pr:dcc}.
\end{proof}

We conclude with a couple of remarks. The first one addresses possible
triangulated structures for the ind-completion of an essentially small
triangulated category.

\begin{rem}
  There is a notion of a \emph{locally finite} triangulated category;
  see \cite{Kr2012,XZ2005}.  One way of defining this is that all
  cohomological functors into abelian groups (covariant or
  contravariant) are coproducts of direct summands of representable
  functors. An equivalent condition is that every short exact sequence
  of cohomological functors does split. Examples are the stable module
  category $\stmod A$ when $A$ is a self-injective algebra of finite
  representation type, or the derived category $\bfD^b(\mod A)$ of a
  hereditary algebra of finite representation type. Then we have
  equivalences $\Ind(\stmod A)\simeq\StMod A$ and
  $\Ind(\bfD^b(\mod A))\simeq \bfD(\Mod A)$. In particular, the
  ind-completions carry a triangulated structure; they are
  compactly generated and each pure-exact triangle splits.

  Now suppose that $\C$ is an essentially small triangulated category
  that is not locally finite. For example, let $\C=\stmod A$ when $A$
  is a finite dimensional self-injective algebra of infinite
  representation type. Passing to $\C^\op$ if necessary this means
  that not all objects in $\Ind\C$ are projective. So we find an exact
  sequence $0\to X\to Y\to Z\to 0$ which does not split. On the other
  hand, if $\Ind\C$ admits a triangulated structure, then each
  kernel-cokernel pair needs to split. It follows that $\Ind\C$ does
  not admit a triangulated structure.
\end{rem}

\begin{rem}
The suspension $\Si\colon\T\iso\T$ induces a $\bbZ$-grading of $\T$.
Let $Z^*(\T)$ denote the \emph{graded centre}.
This is a graded commutative ring and any  ring homomorphism
$R\to Z^*(\T)$ from a  graded commutative ring $R$ induces an
$R$-linear structure on the graded morphisms
\[ \Hom^*(X,Y):= \bigoplus_{i\in\bbZ}\Hom(X,\Si^i Y) 
\quad \text{for}\quad
X,Y\in\T.\]
In particular, the notion of an $R$-artinian object extends to the
graded setting. All statements and their proofs remain valid in this generality.
\end{rem}

\section{Torsion versus completion}\label{se:t-vs-c}

For any compactly generated triangulated category and any choice of
compact objects generating a localising subcategory, there is an
adjoint pair of functors that resembles derived torsion and completion
functors for the derived category of a commutative ring. 

Let $\T$ be a compactly generated triangulated category with
suspension $\Sigma\colon\T\iso\T$ and set $\C=\T^c$. We choose a thick
subcategory $\C_0\subseteq\C$ and denote by $\T_0\subseteq\T$ the
localising subcategory which is generated by $\C_0$. Note that
$(\T_0)^c=\C_0$.  The inclusion $\T_0\to\T$ admits a right adjoint, by Brown's
representability theorem, which we denote by $q\colon\T\to\T_0$. This
functor preserves coproducts and then another application of Brown's
representability theorem yields a right adjoint $q_\rho$ which is
fully faithful. Thus the
left adjoint $q_\lambda$ and the right adjoint $q_\rho$  provide two embeddings of
$\T_0$ into $\T$, and our notation suggests a symmetry which does not
give preference to any of the inclusions.
\begin{equation*}
\begin{tikzcd}[column sep = huge]
    {\T}
    \arrow[twoheadrightarrow]{r}[description]{q} &\T_0
    \arrow[tail,swap,yshift=1.5ex]{l}{q_\lambda}\arrow[tail,yshift=-1.5ex]{l}{q_\rho}
\end{tikzcd}
\end{equation*}

\begin{defn}
  The choice of  $\C_0\subseteq\C$ yields exact functors
\[\Ga := q_\lambda\circ q \qquad\text{and}\qquad \La := q_\rho\circ
  q \] which form an adjoint pair.  For any object $X$ in $\T$ the
unit $X\to\La X$ is called \emph{completion} and the counit
$\Ga X\to X$ is called \emph{torsion} or \emph{local cohomology} of
$X$.
\end{defn}

Note that these functors are idempotent, since
$\id_{\T_0}\iso q\circ q_\lambda$ and $q\circ q_\rho\iso \id_{\T_0}$.
From the definitions it is clear that the adjoint pair $(\Ga,\La)$
induces mutually inverse equivalences
\begin{equation}\label{eq:DG}
  \begin{tikzcd}[column sep = huge]
    \La\T \arrow[yshift=.75ex]{r}{\Ga} &{\Ga\T}
    \arrow[yshift=-.75ex]{l}{\La}
  \end{tikzcd}
\end{equation}
where $\La\T=\{X\in\T\mid X\iso\La X\}$ and $\Ga\T=\{X\in\T\mid \Ga
X\iso X\}$. 

We are interested in the completions of the compact objects from $\T$,
and we may view these as objects of $\Ga\T$, because of the above
equivalence \eqref{eq:DG}. Thus it is equivalent to look at the local
cohomology of the compact objects from $\T$.
Set \[\widehat\C:=\thick(\Ga\C)\simeq\thick(\La\C)\subseteq\La\T.\]
Then we obtain the following chain of inclusions.
\[\C_0=(\Ga\T)^c\subseteq\widehat\C\subseteq\Ga\T\subseteq\T\]
The next diagram shows how these various subcategories of $\T$ are
related. The inclusion $\T_0\to\T$ admits the right adjoint $\Ga$, and
we may think of its restriction $\C\to\widehat\C$ as a mock right adjoint
of the inclusion $\C_0\to\C$.
\[\begin{tikzcd}
\C_0  \arrow[tail]{d}\arrow[tail]{r}&\widehat\C \arrow[tail]{r}&\Ga\T \arrow[tail, xshift=-.75ex]{d}\\
\C   \arrow[tail]{rr}  \arrow{ur}&&\T
\arrow[twoheadrightarrow,xshift=.75ex,swap,"\Ga"]{u}
\end{tikzcd}\]

\begin{probl}
  Find a description of $\widehat\C$ for given triangulated categories $\C_0\subseteq\C$.
\end{probl}

Let us get back to the distinction between ind-completion and adic
completion (cf.\ Remark~\ref{re:ind-adic}). This carries over to our triangulated
setting; it means that we can approach $\widehat\C$ from two
directions, using either the inclusion
$\C_0\to\widehat\C$ or the  functor $\C\to\widehat\C$.

  We illustrate this with an example which explains the terminology;
  it goes back to work of Dwyer, Greenlees, and May
  \cite{DG2002,GM1992}.  Let $A$ be a commutative ring. We
  set $\T=\bfD(\Mod A)$ and identify $\bfD^b(\proj A)=\T^c=\C$ (the
  category of perfect complexes). Recall for an ideal $I\subseteq A$
  and any $A$-module $M$ the definition of \emph{$I$-torsion}
  \[M\longmapsto \colim_{n\ge 0}(\Hom_A(A/I^n,M))\subseteq M\]
 and \emph{$I$-adic completion}   \[M\longmapsto\lim_{n\ge 0} (M\otimes_A A/I^n).\]

\begin{exm}[{\cite{DG2002}}]
  Fix a finitely generated ideal $I\subseteq A$ and let $\C_0$ denote the category of
  perfect complexes having $I$-torsion cohomology. Then $\T_0$ equals
  the category of all complexes in $\T$ having $I$-torsion
  cohomology. The functor $\Ga$ equals the local cohomology functor
  (i.e.\ the right derived functor of $I$-torsion), while $\La$ equals
  the derived completion functor (i.e.\ the left derived functor of
  $I$-adic completion).  Moreover, $\widehat\C$ is triangle equivalent
  to $\bfD^b(\proj \widehat A)$.
\end{exm}
\begin{proof}
  Let $K$ denote the \emph{Koszul complex} given by a finite sequence of
  generators $x_1,\ldots,x_n$ of $I$. This is the object $K=K_n$ in
  $\bfD^b(\proj A)$ that is obtained by setting $K_0=A$ and
  $K_{r}=\Cone(K_{r-1}\xto{x_{r}}K_{r-1})$ for $r=1,\ldots,n$. We view
  $K$ as a dg left module over the dg endomorphism ring $E=\END_A(K)$
  and we view $K^\vee=\HOM_A(K,A)$ as a dg right module over $E$. Let
  $\bfD(E)$ denote the derived category of the category of dg right
  $E$-modules. Then we obtain the following diagram
\begin{equation*}
\begin{tikzcd}[column sep = huge]
  {\T}=\bfD(A)    \arrow{r}[description]{q} &\bfD(E)
    \arrow[swap,yshift=1.5ex]{l}{q_\lambda}\arrow[yshift=-1.5ex]{l}{q_\rho}
\end{tikzcd}
\end{equation*}
where
\[q=\HOM_A(K,-)=-\otimes_A K^\vee\qquad q_\lambda=-\otimes_E K\qquad
  q_\rho=\HOM_E(K^\vee,-).\] In \cite[\S6]{DG2002} it is shown that
$q_\lambda$ identifies $\bfD(E)$ with the category of all complexes in
$\bfD(A)$ having $I$-torsion cohomology, while $q_\rho$ identifies
$\bfD(E)$ with the category of all complexes in $\bfD(A)$ that are
$I$-complete. Moreover, it is shown that $\Ga = q_\lambda\circ q$
computes local cohomology, while $\La = q_\rho\circ q$ yields  derived
completion. Next we compute the graded endomorphisms of $q(A)=K^\vee$ and have
by adjunction \[\Hom_E^*(K^\vee,K^\vee)\cong\Hom^*_A(A,\La A)\cong
  H^*(\La A)\cong \widehat A.\] It follows that
\[\widehat\C=\thick (K^\vee)\simeq\bfD^b(\proj \widehat A).\qedhere\]
\end{proof}

We have a more specific description of $\widehat\C$ when $A$ is local;
it uses the tensor triangulated structure of the derived category of a
commutative ring.

\begin{exm}[{\cite[Theorem~4.1]{BIKP2023}}]\label{ex:BIKP}
  Let $A$ be a commutative noetherian local ring and $\fm$ its maximal
  ideal.  Then the derived category $\T=\bfD(\Mod A)$ is a tensor
  triangulated category. Let $\C_0$ denote the category of perfect
  complexes having $\fm$-torsion cohomology, which is a thick tensor
  ideal of $\T^c$. Then $\widehat\C$ equals the subcategory of
  dualisable (or rigid) objects in the tensor triangulated category
  $\T_0$.
\end{exm}

Let us provide a criterion for the  functor $\C_0\to\widehat\C$ to be
a partial completion of triangulated categories; it covers the previous
example of a local ring with $\C_0$ the category of perfect complexes
having torsion cohomology. Our motivation is the following. If
$\C_0\to\widehat\C$ is a partial completion, then the functor
$\C\to\widehat\C$ taking $X$ to $\Ga X$ induces a functor
\[\Coh\C_0\supseteq\{\Hom(-,X)|_{\C_0}\mid X\in\C\}\xto{\ \g\ }\widehat\C\]
such that
\begin{enumerate}
\item $\g$ is fully faithful  and almost an equivalence, up to the fact that $\Ga\C\subseteq
\Ga\T$ need not be a thick subcategory, and
\item the category $\{\Hom(-,X)|_{\C_0}\mid X\in\C\}$ is explicitly
  given by $\C_0\subseteq\C$.
\end{enumerate}

\begin{prop}
  Let $R$ be a commutative ring and suppose that $\C$ is
  $R$-linear. Suppose also that $\Hom(X,Y)$ is a finitely generated
  $R$-module for all objects $X,Y$ in $\C$ and that $\End(X)$ has
  finite length for each $X$ in $\C_0$.  Then the inclusion
  $\C_0\to\widehat\C$ is a partial completion.
\end{prop}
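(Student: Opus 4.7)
The plan is to deduce the statement from Corollary~\ref{co:artinian}, applied not to $\T$ itself but to $\T_0$: the latter is again a compactly generated triangulated category with compact objects $\C_0$, and it inherits $R$-linearity from $\C$. Since $\widehat\C=\thick(\Ga\C)$ is by definition a thick subcategory of $\Ga\T\simeq\T_0$, and since $\Ga X\cong X$ for every $X\in\C_0$, we have $\C_0\subseteq\widehat\C$. Thus Corollary~\ref{co:artinian} will give the desired partial completion $\C_0\to\widehat\C$ the moment one knows that every object of $\widehat\C$ is $R$-artinian in $\T_0$.

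The class of $R$-artinian objects of $\T_0$ is a thick subcategory (standard: use the long exact sequence obtained from $\Hom(C,-)$, together with the fact that artinian modules are closed under subobjects, quotients, and extensions). Since $\widehat\C$ is generated as a thick subcategory by $\Ga\C$, it therefore suffices to show that $\Ga Y$ is $R$-artinian for every $Y\in\C$, i.e.\ that $\Hom_{\T_0}(C,\Ga Y)$ is an artinian $R$-module for every $C\in\C_0$. The adjunction, combined with full faithfulness of the inclusion $\T_0\hookrightarrow\T$, gives
\[
  \Hom_{\T_0}(C,\Ga Y)\;\cong\;\Hom_{\T}(C,Y),
\]
and by the first hypothesis this is a finitely generated $R$-module. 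What remains is the purely ring-theoretic step of promoting \emph{finitely generated} to \emph{of finite length}.

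For this I would use the second hypothesis. The $R$-action on $\Hom_{\T}(C,Y)$ factors through the structure map $\varphi\colon R\to\End(C)$, $r\mapsto r\cdot\id_C$, and hence through the commutative subring $\varphi(R)\cong R/\ker\varphi$ of $\End(C)$. Because $\End(C)$ has finite length as an $R$-module, so does its submodule $\varphi(R)$; being a commutative artinian ring, $\varphi(R)$ is also noetherian (Akizuki), so every finitely generated $\varphi(R)$-module has finite length. In particular $\Hom_{\T}(C,Y)$, being finitely generated over $R$ and annihilated by $\ker\varphi$, has finite length as an $R$-module and is therefore $R$-artinian.

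I expect the only real obstacle to be bookkeeping: keeping the two ambient categories $\T$ and $\T_0$ separate, verifying that $R$-linearity and compact generation pass to $\T_0$, and realising $\widehat\C$ inside $\T_0$ via the equivalence $\La\T\simeq\Ga\T\simeq\T_0$ rather than inside $\T$. Once this is cleared up, Corollary~\ref{co:artinian} applied to the pair $(\C_0,\widehat\C)\subseteq\T_0$ yields exactly the claimed partial completion.
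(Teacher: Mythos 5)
Your proof is correct and takes essentially the same route as the paper: the paper's own (two-sentence) proof likewise observes that the hypotheses force $\Hom(C,X)$ to have finite length over $R$ for every $C\in\C_0$ and $X\in\widehat\C$, and then applies Corollary~\ref{co:artinian} to $\T_0$ with $(\T_0)^c=\C_0$. You have simply made explicit the steps the paper leaves implicit---the reduction along $\widehat\C=\thick(\Ga\C)$ using thickness of the $R$-artinian objects, the adjunction isomorphism $\Hom_{\T_0}(C,\Ga Y)\cong\Hom_\T(C,Y)$, and the Akizuki-type promotion of finitely generated to finite length via $R/\ker\varphi$---all of which are sound.
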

\begin{proof}
  The assumption implies that for each $X\in\widehat\C$ and $C\in\C_0$
  the $R$-module $\Hom(C,X)$ has finite length. Thus we can apply
  Corollary~\ref{co:artinian}.
\end{proof}

We continue with examples.  Let $A$ be a right coherent ring and
denote by $\Inj A$ the category of all injective $A$-modules. Then the
category of complexes $\bfK(\Inj A)$ (with morphisms the chain
morphisms up to homotopy) is compactly generated, and taking a module
to its injective resolution identifies $\bfD^b(\mod A)$ with the full
subcategory of compact objects; see \cite[Proposition~9.3.12]{Kr2022}
for the noetherian case and \cite[Theorem~4.9]{Kr2015} for the general case.

The following example should be compared with Example~\ref{ex:coherent}.

\begin{exm}
  Let $A$ be a right coherent ring. We consider $\T=\bfK(\Inj A)$ and
  choose $\C_0=\bfD^b(\proj A)$. Then it is easily checked that
  $\C\iso\widehat\C$.
\end{exm}

The next example is more challenging.

\begin{exm}[BG-conjecture]
  Let $k$ be a field and $G$ a finite group. We consider the group
  algebra $kG$ and set $\T=\bfK(\Inj kG)$.  Let $\bfi k$ denote an
  injective resolution of the trivial representation. Then the
  assignment $X\mapsto X\otimes_k \bfi k$ identifies
  $\C=\bfD^b(\mod kG)$ with $\T^c$. We choose $\C_0=\thick(k)$ and
  note that $\T_0$ identifies with the derived category of dg modules
  over the algebra $C^*(BG;k)$ (the cochains of the classifying space
  $BG$ with coefficients in $k$) \cite{BK2008}. For $X\in\T$ we
  set
  \[H^*(G,X)=\Hom^*(\bfi k,X)=\bigoplus_{n\in\bbZ}\Hom(\bfi k,\Si^n
    X)\] and view this as a module over the cohomology algebra
  $H^*(G,k)$. It is well known that this algebra is noetherian and
  that $H^*(G,X)$ is a noetherian module for any compact object $X$. A
  recent conjecture of Benson and Greenlees asserts that $\widehat\C$
  equals the category of objects in $\T_0$ such that $H^*(G,X)$ is a
  noetherian module over $H^*(G,k)$; see
  \cite[Conjecture~1.4]{BG2023}. Note that $\C_0\to\widehat\C$ is a
  partial completion.
\end{exm}

We offer another challenge.

\begin{exm}[Strong generation conjecture]
  Let $A$ be a commutative noetherian local ring. It is well known
  that $A$ is regular if and only if the triangulated category
  $\bfD^b(\proj A)$ admits a strong generator (in the sense of Bondal
  and Van den Bergh \cite{BV2003}). On the other hand, $A$ is regular
  if and only if its completion $\widehat A$ is regular. Keeping in
  mind Example~\ref{ex:BIKP}, one may conjecture: If $\C$ admits a strong
  generator, then $\widehat\C$ admit a strong generator.
\end{exm}

\section{Completing complexes of finite length modules}

We return to a  previous example and give now a full proof of the
following.

\begin{prop}[{\cite[Example~4.2]{Kr2020}}]\label{pr:flA}
  For a commutative noetherian ring $A$ the inclusion
  \[\bfD^b(\fl A)\lto \bfD^b(\art A)\] is a sequential partial completion.
\end{prop}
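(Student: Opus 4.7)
The plan is to exhibit each object of $\bfD^b(\art A)$ as a canonical sequential colimit of objects in $\bfD^b(\fl A)$ via socle series, and then verify the Hom-formula from Lemma~\ref{le:Hom-completion} for the restricted Yoneda functor.

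For $X \in \bfD^b(\art A)$ I would set $X_i^n := \soc^i X^n$ degreewise. Naturality of the socle functor makes each $X_i$ a subcomplex, and each $\soc^i X^n$ has finite length because the socle of an artinian module over a commutative noetherian ring is artinian semisimple, hence of finite length, and higher socles are iterated extensions of such. Since $X$ is bounded, $X_i \in \bfD^b(\fl A)$. The standard fact that every artinian module equals the union of its socle series (reducing to the local case via the finite support on the maximal spectrum of $A$) gives $X = \bigcup_i X_i$. To compute $\Hom_{\bfD^b(\art A)}(C,X)$ for $C \in \bfD^b(\fl A)$, I would use the fully faithful embeddings $\bfD^b(\fl A), \bfD^b(\art A) \hookrightarrow \bfD(\Mod A)$ (each subcategory is closed under subobjects and extensions in $\Mod A$) together with a resolution $P \to C$ by a bounded-above complex of finitely generated projectives, available because $A$ is noetherian. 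Boundedness of $X$ makes each $\Hom^n(P,X)$ a finite direct sum of groups $\Hom(P^j, X^{j+n})$; since finitely generated projectives have Hom preserving filtered colimits, and filtered colimits are exact, one obtains
\[
  \Hom_{\bfD(\Mod A)}(C,X) \;=\; \colim_i \Hom_{\bfD(\Mod A)}(C,X_i).
\]
This shows $f_*(X) \cong \colim_i h_{X_i}$ in $\Coh(\bfD^b(\fl A))$, so $f_*$ factors through $\Ind_\bbN(\bfD^b(\fl A))$.

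To show $f_*$ is fully faithful, I would apply a dual procedure: write also $Y = \bigcup_j Y_j$ as above, take a bounded-below injective resolution $Y \to I$ in $\Mod A$, and deduce $\Hom^\bullet(X,I) = \lim_i \Hom^\bullet(X_i,I)$. The inclusions $X_i \hookrightarrow X_{i+1}$ become surjections after applying $\Hom(-,I^n)$, since $I^n$ is injective, so the inverse system is Mittag-Leffler and $H^0$ commutes with $\lim$. Combined with the previous colimit formula this gives
\[
  \Hom_{\bfD(\Mod A)}(X,Y) \;=\; \lim_i \colim_j \Hom_{\bfD(\Mod A)}(X_i,Y_j) \;=\; \Hom_{\Ind_\bbN(\bfD^b(\fl A))}(f_*X, f_*Y),
\]
the last equality by Lemma~\ref{le:Hom-completion}. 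The main obstacle I anticipate is managing this limit side cleanly, in particular justifying the Mittag-Leffler step and arranging the injective resolution so that $H^0$ and $\lim$ genuinely commute; the colimit side is easier because the perfect representative $P$ makes $\Hom(C,-)$ continuous with respect to filtered colimits of modules in each degree.
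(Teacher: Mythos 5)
Your colimit half is sound, and your overall route is genuinely different from the paper's: the paper embeds the situation into the compactly generated category $\bfK(\Inj_0 A)$, checks that $\Hom(C,Y)$ has finite length for $C\in\bfD^b(\fl A)$, $Y\in\bfD^b(\art A)$, and then invokes the pure-injectivity machinery (artinian objects are $\Sigma$-pure-injective, Corollary~\ref{co:artinian}), whereas you attempt a direct $\lim$/$\colim$ computation with the socle truncations, in the spirit of the proof of Proposition~\ref{pr:perfect}. However, the step you yourself flagged is a genuine gap as written. Degreewise surjectivity of the transition maps $\Hom^\bullet(X_{i+1},I)\to\Hom^\bullet(X_i,I)$ (from injectivity of each $I^n$) does \emph{not} make $H^0$ commute with the inverse limit; it only guarantees the Milnor exact sequence for a tower of complexes with surjective transition maps:
\[0\lto \sideset{}{^{1}}\lim_i \Hom_{\bfD(\Mod A)}(X_i,\Si^{-1}Y)\lto \Hom_{\bfD(\Mod A)}(X,Y)\lto \lim_i\Hom_{\bfD(\Mod A)}(X_i,Y)\lto 0.\]
Chain-level Mittag-Leffler kills $\lim^1$ of the cochain groups, not of their cohomology, and the kernel term here is exactly the phantom subgroup of Lemma~\ref{le:phantom}. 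It does not vanish for free: if it did, restricted Yoneda would be fully faithful on arbitrary homotopy colimits, which the paper explicitly warns is false. Note also that you cannot argue the tower stabilises, since the paper points out that $(\soc^n X)_{n\ge 0}$ need not be a Cauchy sequence.

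The gap is fixable, and the fix is precisely the finiteness ingredient at the heart of the paper's proof: for $C\in\bfD^b(\fl A)$ and $Y\in\bfD^b(\art A)$ the $A$-module $\Hom_{\bfD(\Mod A)}(C,Y)$ has finite length (this reduces to showing $\Ext^n(M,N)$ has finite length for $M\in\fl A$, $N\in\art A$, e.g.\ by resolving $N$ by artinian injectives, each a finite sum of modules $E(A/\fm)$, and using that the Matlis dual of a finite length module has finite length). A tower of finite length modules automatically satisfies the Mittag-Leffler condition, so $\sideset{}{^{1}}\lim_i\Hom(X_i,\Si^{-1}Y)=0$ and your interchange $\Hom(X,Y)\cong\lim_i\colim_j\Hom(X_i,Y_j)$ goes through, matching Lemma~\ref{le:Hom-completion}. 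With this repair your argument trades the paper's appeal to Crawley-Boevey's theory for an explicit Milnor-sequence computation, which is a legitimate and arguably more elementary alternative. A secondary loose end: ``closed under subobjects and extensions'' is not by itself a criterion for $\bfD^b(\A)\to\bfD(\Mod A)$ to be fully faithful; for $\art A$ argue instead that injective envelopes of artinian modules are artinian, so $\art A$ has enough injectives that remain injective in $\Mod A$ (and handle $\fl A$ via finitely generated resolutions, or simply cite \cite[Corollary~4.2.9]{Kr2022} as the paper does).
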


\begin{proof}
  Recall that an $A$-module $M$ is artinian if and only if $M$ is the
  union of finite length modules and $\soc M$
  has finite length \cite[Proposition~2.4.20]{Kr2022}. In particular,
  the abelian category $\art A$ has enough injective objects.

  We write $\Mod_0 A$ for the full subcategory of $A$-modules that are
  filtered colimits of finite length modules.  Thus the inclusion
  $\fl A\to\Mod A$ induces an equivalence $\Ind\fl A\iso\Mod_0 A$.
  Set
  \[\Inj_0 A= \Inj A \cap\Mod_0 A\qquad\text{and}\qquad \inj_0
    A=\Inj A\cap\art A.\] Note that an injective $A$-module is
  in $\Mod_0 A$ if and only if each indecomposable direct summand is
  artinian.  Now consider the compactly generated triangulated
  $\T=\bfK(\Inj_0 A)$ given by the complexes in $\Inj_0 A$. Then we
  have canonical triangle equivalences
  \[\bfD^b(\fl A)\iso\T^c \qquad\text{and}\qquad \bfD^b(\art
    A)\iso\bfK^{+,b}(\inj_0 A)\subseteq \T;\] see Corollary~4.2.9 and
  Proposition~9.3.12 in \cite{Kr2022}. Next observe that for
  $X\in \bfD^b(\fl A)$ and $Y \in\bfD^b(\art A)$ the $A$-module
  $\Hom(X,Y)$ has finite length. This amounts to showing that
  $\Ext^i(M,N)$ has finite length for all $M\in\fl A$, $N\in\art A$,
  and $i\in\bbZ$, which reduces to the case $i=0$ by taking an
  injective resolution of $N$.  Thus we can apply
  Corollary~\ref{co:artinian} and it follows that
  $\bfD^b(\fl A)\to \bfD^b(\art A)$ is a partial completion. It remains to
  observe that each complex $X$ in $\bfD^b(\art A)$ is  the colimit of the
  sequence $(\soc ^n X)_{n\ge 0}$ in $\bfD^b(\fl A)$, but this
  need not be a Cauchy sequence.
\end{proof}

When the ring $A$ is local and regular, the completion  $\bfD^b(\fl
A)\to \bfD^b(\art A)$  identifies with $\C_0\to\widehat\C$ in Example~\ref{ex:BIKP}.
  
\section{Completing perfect complexes}

We return to another example and give  a full proof of the
following.

\begin{prop}[{\cite[Theorem~6.2]{Kr2020}}]\label{pr:perfect}
  For a right coherent ring $A$  the inclusion
  \[\bfD^b(\proj A)\lto\bfD^b(\mod A)\] is a Cauchy sequential partial
  completion.
\end{prop}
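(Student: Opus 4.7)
The plan is to exhibit the inclusion as a Cauchy sequential partial completion inside the compactly generated ambient category $\T=\bfD(\Mod A)$, whose compact objects are precisely the perfect complexes $\T^c=\bfK^b(\proj A)=\bfD^b(\proj A)=:\C$. Because $A$ is right coherent, $\mod A$ is closed under extensions in $\Mod A$, so $\D:=\bfD^b(\mod A)\hookrightarrow\T$ is fully faithful and exact. It remains to verify that the restricted Yoneda functor $f_*\colon\D\to\Coh\C$ (i) factors through $\Ind_\cau\C$ and (ii) is fully faithful. Note that Corollary~\ref{co:pure-inj} is not directly available, since modules such as $\bbZ\in\bfD^b(\mod\bbZ)$ are not pure-injective in $\bfD(\Mod\bbZ)$; the Cauchy approximations must be produced by hand.

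Given $X\in\D$, right coherence furnishes a quasi-isomorphism $P_\bullet\to X$ with $P_\bullet\in\bfK^{-,b}(\proj A)$ a bounded-above complex of finitely generated projectives. Set $X_n:=\sigma^{\ge -n}P_\bullet\in\C$, the brutal truncation, with its canonical maps $X_n\to X_{n+1}\to X$. The workhorse is the following stabilisation. For any bounded complex $C$ of projectives supported in cohomological degrees $[a,b]$ and any complex $Z$ of $A$-modules, $\Hom_\T(C,Z)=\Hom_{\bfK}(C,Z)$ depends only on the terms $Z^i$ with $a-1\le i\le b+1$, because $C$ is K-projective and its shape bounds the degrees in which chain maps and homotopies can be nonzero. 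Applied to $Z=P_\bullet$ and $Z=\sigma^{\ge -n}P_\bullet$, which agree in all degrees $\ge -n$, this gives $\Hom_\T(C,X_n)\iso\Hom_\T(C,X_{n+1})\iso\Hom_\T(C,X)$ once $n\ge 1-a$; hence $(X_n)$ is Cauchy and $h_X=\colim_n h_{X_n}$ in $\Coh\C$. This establishes (i).

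For (ii), fix $X,Y\in\D$ with associated Cauchy approximations $(X_n)$ and $(Y_m)$ constructed as above. The same stabilisation, applied now with source $X_n$ (itself a bounded complex of projectives) and targets $Y,Y_m$, shows on the one hand that $Y=\hocolim_m Y_m$ in $\T$, and on the other that $\Hom_\T(X_n,Y)$ eventually equals $\Hom_\T(X,Y)$. Combining compactness of the $X_n$ with Lemma~\ref{le:Hom-completion} and Proposition~\ref{pr:completion} then yields
\[\Hom_{\Coh\C}(h_X,h_Y)=\lim_n\colim_m\Hom(X_n,Y_m)=\lim_n\Hom_\T(X_n,Y)=\Hom_\T(X,Y)=\Hom_\D(X,Y),\]
and one checks that this isomorphism is the natural map induced by $f_*$. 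The only substantive input is the existence of the bounded-above finitely generated projective resolution $P_\bullet$ of a bounded complex, which is exactly where right coherence of $A$ enters; the remaining obstacle is the index bookkeeping underlying the $\Hom_{\bfK}$-stabilisation, which is routine.
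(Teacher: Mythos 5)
Your proposal is correct in substance and shares the paper's skeleton --- the same brutal truncations $X_n=\sigma^{\ge -n}P_\bullet$ of a bounded-above resolution by finitely generated projectives, which is where right coherence enters in both arguments --- but the mechanism for full faithfulness is genuinely different. The paper works in $\bfK(\Proj A)$, writes $X=\hocolim_n X_n$, and routes everything through the homotopy colimit machinery: the towers $\Hom(X_{n+1},Y)\to\Hom(X_n,Y)$ stabilise because the cone of $X_n\to X_{n+1}$ is concentrated in degree $-n-1$, so $\lim^1$ vanishes, Lemma~\ref{le:phantom} then gives $\Ph(X,Y)=0$, and Lemma~\ref{le:yoneda} identifies $\Hom(X,Y)$ with $\Hom(h_X,h_Y)$. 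You avoid homotopy colimits and phantoms altogether: you compute $\Hom(h_X,h_Y)\cong\lim_n\colim_m\Hom(X_n,Y_m)$ via Lemma~\ref{le:Hom-completion} and Proposition~\ref{pr:completion}, and collapse the limit and colimit by elementary degreewise window arguments for $\Hom_{\bfK}$. Your route is more self-contained and hands-on; the paper's route buys reusable general lemmas and, crucially, the explicit byproduct $\Ph(X,Y)=0$, which is exactly the phantomlessness hypothesis fed into the enhancement result (Proposition~\ref{pr:tria-completion}) and the closing example of the last section. The choice of ambient category ($\bfD(\Mod A)$ versus $\bfK(\Proj A)$) is cosmetic, since both arguments mediate everything through K-projective resolutions.

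One step is misattributed, though easily repaired. Your stabilisation lemma (bounded source $C$ in degrees $[a,b]$, window $[a-1,b+1]$ in the target) applied ``with source $X_n$'' does yield $\colim_m\Hom(X_n,Y_m)\iso\Hom(X_n,Y)$, but it cannot yield $\Hom(X,Y)\iso\Hom(X_n,Y)$ for $n\gg 0$: there the \emph{source} varies and $P_\bullet$ is unbounded below. You need the mirror statement --- for a bounded representative $Y^\bullet$ in degrees $[c,d]$, the group $\Hom_{\bfK}(P,Y^\bullet)$ depends only on the terms of the K-projective source $P$ in degrees $[c-1,d+1]$ --- or, equivalently, the paper's cone argument: the cone of $X_n\to X$ is a complex of projectives in degrees $\le -n-1$, which admits no nonzero maps to $Y^\bullet$ or $\Sigma Y^\bullet$ once $n\gg 0$. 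Since the mirror lemma is proved by the identical bookkeeping you already flag as routine, this is a misstatement rather than a gap. Finally, the claim $Y=\hocolim_m Y_m$ is never used in your chain of isomorphisms and can simply be dropped, which removes your only real dependence on coproducts in $\T$.
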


The proof requires several lemmas which are of independent
interest. Let $\T$ be a triangulated category with suspension
$\Si\colon\T\iso\T$ and suppose that countable coproducts exist in
$\T$.

\begin{defn}
 A \emph{homotopy
colimit}  of a sequence of morphisms 
\[
\begin{tikzcd}
X_0\arrow{r}{\p_0}&X_1\arrow{r}{\p_1}&
  X_2\arrow{r}{\p_2}&\cdots
\end{tikzcd}\] in $\T$ is an object $X$ that occurs
in an exact triangle
\[\begin{tikzcd}
    \Si^{-1}X\arrow{r}&\coprod_{n\ge
      0}X_n\arrow{r}{\id-\p}&\coprod_{n\ge 0}X_n\arrow{r}& X.
  \end{tikzcd}\] We write $\hocolim_n X_n$ for $X$ and observe that a
homotopy colimit is unique up to a (non-unique) isomorphism.
\end{defn}

Recall that an object $C$ in $\T$ is \emph{compact} if $\Hom(C,-)$
preserves all coproducts. A morphism $X\to Y$ is \emph{phantom} if any
composition $C\to X\to Y$ with $C$ compact is zero. The phantom
morphisms form an ideal and we write $\Ph(X,Y)$ for the subgroup of
all phantoms in $\Hom(X,Y)$.

Let us compute the functor $\Hom(-, \hocolim_n X_n)$. To this end observe that a
sequence
\[\begin{tikzcd}
A_0\arrow{r}{\p_0}&A_1\arrow{r}{\p_1}&
  A_2\arrow{r}{\p_2}&\cdots
\end{tikzcd}\]
of maps between abelian groups induces an exact sequence
\[\begin{tikzcd}
    0\arrow{r}&\coprod_{n\ge
      0}A_n\arrow{r}{\id-\p}&\coprod_{n\ge 0}A_n\arrow{r}&   \colim_n A_n\arrow{r}&0
\end{tikzcd}\]
because it identifies with the colimit of the exact sequences
\[\begin{tikzcd}
    0\arrow{r}&\coprod_{i=0}^{n-1}A_i\arrow{r}{\id-\p}&\coprod_{i=0}^{n}A_i\arrow{r}&  A_n\arrow{r}&0.
\end{tikzcd}\]

\begin{lem}\label{le:hocolim}
  Let $C\in\T$ be compact. Then any sequence $X_0\to X_1\to X_2\to\cdots$ in
  $\T$ induces an isomorphism
  \[\colim_n\Hom(C,X_n)\longiso\Hom(C,\hocolim_n X_n).\]
\end{lem}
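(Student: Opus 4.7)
The plan is to feed the defining exact triangle
\[\Si^{-1}X\lto\coprod_{n\ge 0}X_n\xto{\id-\p}\coprod_{n\ge 0}X_n\lto X\]
of $X=\hocolim_n X_n$ into the cohomological functor $\Hom(C,-)$ and read off what comes out. Because $C$ is compact, the canonical maps
\[\coprod_{n\ge 0}\Hom(C,X_n)\lto\Hom\bigl(C,\coprod_{n\ge 0}X_n\bigr)\quad\text{and}\quad\coprod_{n\ge 0}\Hom(C,\Si X_n)\lto\Hom\bigl(C,\Si\coprod_{n\ge 0}X_n\bigr)\]
are isomorphisms, so the relevant portion of the long exact sequence becomes
\[\coprod_{n\ge 0}\Hom(C,X_n)\xto{\id-\p_*}\coprod_{n\ge 0}\Hom(C,X_n)\lto\Hom(C,X)\lto\coprod_{n\ge 0}\Hom(C,\Si X_n)\xto{\id-(\Si\p)_*}\coprod_{n\ge 0}\Hom(C,\Si X_n).\]

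Next I would invoke the elementary exact sequence displayed immediately before the lemma, applied to the two abelian-group sequences $\Hom(C,X_n)$ and $\Hom(C,\Si X_n)$. It supplies simultaneously that $\id-\p_*$ is injective with cokernel $\colim_n\Hom(C,X_n)$, and that $\id-(\Si\p)_*$ is also injective. The latter injectivity forces the connecting map $\Hom(C,X)\to\coprod_{n\ge 0}\Hom(C,\Si X_n)$ to vanish, so $\Hom(C,X)$ is precisely the cokernel of $\id-\p_*$, i.e.\ $\colim_n\Hom(C,X_n)$. Chasing through the construction shows that this isomorphism is the one induced by the structure morphisms $X_n\to X$ coming from the coproduct projection in the triangle.

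No serious obstacle is expected: the only substantive ingredient is compactness of $C$, which turns the coproduct in $\T$ into a coproduct of abelian groups and lets the triangle and the elementary exact sequence match up perfectly. The mild point worth being careful about is the identification of the canonical map in both directions (that $\Hom(C,X)$ genuinely receives the colimit cocone rather than some twist of it), but this is immediate once the diagram is written down, since both maps arise from the same coproduct inclusions $X_n\to\coprod_{n\ge 0}X_n$ followed by the triangle morphism to $X$.
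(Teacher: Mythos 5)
Your proof is correct and follows essentially the same route as the paper: apply $\Hom(C,-)$ to the defining triangle, use compactness to identify $\Hom(C,\coprod_n X_n)$ with $\coprod_n\Hom(C,X_n)$, and compare with the elementary exact sequence computing the colimit. The paper compresses the comparison into one sentence, whereas you usefully make explicit the key point it leaves implicit, namely that injectivity of $\id-(\Si\p)_*$ (from the same elementary sequence applied to $\Hom(C,\Si X_n)$) kills the connecting map, so $\Hom(C,X)$ is exactly the cokernel of $\id-\p_*$.
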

\begin{proof}
  The above observation gives an exact sequence
  \[\begin{tikzcd}[column sep=small]
      0\arrow{r}&
      \coprod_{n}\Hom(C,X_n)\arrow{r}&\coprod_{n}\Hom(C,X_n)\arrow{r}&
      \colim_n\Hom(C,X_n)\arrow{r}&0.
  \end{tikzcd}\] 
Now apply $\Hom(C,-)$ to the defining
  triangle for $\hocolim_n X_n$. Comparing both sequences yields the
  assertion, since \[\coprod_n\Hom(C,X_n)\cong \Hom(C, \coprod_n X_n).\qedhere\]
\end{proof}

Recall that for any sequence
$\cdots\to A_2\xto{\p_2} A_1\xto{\p_1} A_0$ of maps between abelian
groups the inverse limit and its first derived functor are given by
the exact sequence
\[\begin{tikzcd}
  0\lto\lim_nA_n\lto  \prod_{n\ge 0}A_n\arrow{r}{\id-\p}&\prod_{n\ge 0}A_n\arrow{r}&
    \lim^1_nA_n\arrow{r}& 0.
  \end{tikzcd}\]
Note that  $\lim^1_nA_n=0$ when $A_n\iso A_{n+1}$ for $n\gg 0$.

The following result goes back to work of Milnor \cite{Mi1962} and 
has been extended by several authors.

\begin{lem}\label{le:phantom}
  Let $X=\hocolim_n X_n$ be a homotopy colimit in $\T$ such that
  each $X_n$ is a coproduct of compact objects. Then we have for any
  $Y$ in $\T$ a natural exact sequence
\[0\lto\Ph(X,Y)\lto\Hom(X,Y)\lto\lim_n\Hom(X_n,Y)\lto 0\]
and an isomorphism
\[\Ph(X,\Si Y)\cong \sideset{}{^{1}}\lim_n\Hom(X_n,Y).\]
\end{lem}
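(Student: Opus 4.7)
The plan is to apply the contravariant cohomological functor $\Hom(-,Y)$ to the defining triangle
\[\Si^{-1}X\lto\coprod_{n}X_n\xto{\id-\p}\coprod_{n}X_n\lto X\]
of the homotopy colimit and rewrite the resulting long exact sequence using the two identifications $\Hom(\coprod_n X_n,Y)\cong\prod_n\Hom(X_n,Y)$ (universal property of coproducts) and the $\lim/\lim^1$ short exact sequence recalled just before the lemma. Writing $A_n:=\Hom(X_n,Y)$ and $B_n:=\Hom(X_n,\Si^{-1}Y)$, the relevant four-term piece of the long exact sequence has the form
\[\prod_n B_n\xto{\id-\p^*}\prod_n B_n\lto\Hom(X,Y)\lto\prod_n A_n\xto{\id-\p^*}\prod_n A_n,\]
so extracting cokernel of the left map and kernel of the right map gives a short exact sequence
\[0\lto\sideset{}{^1}\lim_n\Hom(X_n,\Si^{-1}Y)\lto\Hom(X,Y)\lto\lim_n\Hom(X_n,Y)\lto 0.\]

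The remaining task is to identify the leftmost term with $\Ph(X,Y)$, that is, to show that the kernel of $\Hom(X,Y)\to\lim_n\Hom(X_n,Y)$ consists precisely of the phantom morphisms. One direction is easy: if $f\colon X\to Y$ is phantom, then for every compact $C$ and every morphism $C\to X_n\to X$ the composition with $f$ vanishes, and since $X_n$ is a coproduct of compacts, this forces $X_n\to X\to Y$ to be zero. For the converse I would invoke Lemma~\ref{le:hocolim}: any morphism $C\to X$ from a compact object factors through some $X_n\to X$, so if every $X_n\to X\to Y$ vanishes then $f$ kills every map from a compact, i.e.\ $f$ is phantom.

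Once this identification is in place, the first exact sequence of the lemma is proved, and the isomorphism $\Ph(X,\Si Y)\cong\lim^1_n\Hom(X_n,Y)$ follows at once by replacing $Y$ with $\Si Y$ in the short exact sequence.

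I expect the main subtlety, rather than a serious obstacle, to be the comparison of the two $\lim/\lim^1$ exact sequences: the map $\id-\p$ appearing in the defining triangle of $\hocolim$ must really match, after applying $\Hom(-,Y)$, the map $\id-\p^*$ appearing in the standard $\lim^1$ presentation, so that the connecting maps agree and the pieces splice together into the desired short exact sequence. Everything else is routine manipulation of long exact sequences and the universal property of coproducts, together with the crucial observation that no nontrivial phantom can escape through a coproduct of compact objects.
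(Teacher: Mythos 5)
Your proposal is correct and follows essentially the same route as the paper: apply $\Hom(-,Y)$ to the defining triangle and splice the resulting long exact sequence with the standard $\lim/\lim^1$ presentation. The paper merely packages your identification of the kernel differently, stating that a morphism $X\to Y$ is phantom if and only if it factors through the connecting morphism $X\to\coprod_{n\ge 0}\Si X_n$, which is exactly the two-direction argument (via the universal property of coproducts of compacts and Lemma~\ref{le:hocolim}) that you spell out.
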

\begin{proof}
  Apply $\Hom(-,Y)$ to the exact triangle defining $\hocolim_n X_n$
  and use that a morphism $X\to Y$ is phantom if and only if it
  factors through the canonical morphism
  $X\to\coprod_{n\ge 0}\Si X_n$.
\end{proof}

Let $\C\subseteq\T$ be a full triangulated subcategory consisting of
compact objects and consider the restricted Yoneda functor
\[\T\lto\Coh\C,\quad X\mapsto h_X:=\Hom(-,X)|_\C.\]
This functor induces for each pair of objects $X,Y\in\T$ a map
\[\Hom(X,Y)\lto\Hom(h_X,h_Y).\]
Clearly, this map is bijective when $X$ is in $\C$, and it remains bijective
when $X$ is a coproduct of objects in $\C$. 

\begin{lem}\label{le:yoneda}
  Let $X=\hocolim_n X_n$ be a homotopy colimit in $\T$ such that
  each $X_n$ is a coproduct of objects in $\C$. Then we have for any
  $Y$ in $\T$ a natural isomorphism
  \[\Hom(X,Y)/\Ph(X,Y)\iso\Hom(h_X,h_Y).\]
  \end{lem}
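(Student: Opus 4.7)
The plan is to compare two descriptions of $\Hom(X,Y)/\Ph(X,Y)$: one coming from Lemma~\ref{le:phantom} and one coming from identifying $h_X$ with a filtered colimit in $\Coh\C$.

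First, I would apply Lemma~\ref{le:phantom} (whose hypotheses are satisfied since each $X_n$ is a coproduct of objects in $\C$, hence a coproduct of compact objects in $\T$) to obtain a short exact sequence
\[0\lto\Ph(X,Y)\lto\Hom(X,Y)\lto\lim_n\Hom(X_n,Y)\lto 0,\]
so that $\Hom(X,Y)/\Ph(X,Y)\cong \lim_n\Hom(X_n,Y)$ naturally in $Y$.

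Next I would compute $h_X$ as an object of $\Coh\C$. For each compact $C\in\C$, Lemma~\ref{le:hocolim} gives $h_X(C)=\Hom(C,X)\cong\colim_n\Hom(C,X_n)=\colim_n h_{X_n}(C)$. Since filtered colimits in $\Coh\C$ are computed pointwise on $\C$, this says $h_X\cong\colim_n h_{X_n}$ in $\Coh\C$. Consequently, using Lemma~\ref{le:Hom-completion} and the remark preceding the statement (which says the restricted Yoneda functor is bijective on morphisms out of a coproduct of objects of $\C$),
\[\Hom(h_X,h_Y)\cong\lim_n\Hom(h_{X_n},h_Y)\cong\lim_n\Hom(X_n,Y).\]
Combining the two displays yields the desired isomorphism $\Hom(X,Y)/\Ph(X,Y)\iso\Hom(h_X,h_Y)$.

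The only thing that still needs care is to check that this isomorphism is precisely the map induced by the restricted Yoneda functor, i.e.\ naturality. This is the expected mild obstacle: one must verify that the surjection $\Hom(X,Y)\to\lim_n\Hom(X_n,Y)$ from Lemma~\ref{le:phantom} (given by restriction along the structural maps $X_n\to X$) coincides, under the identification $\lim_n\Hom(X_n,Y)\cong\Hom(h_X,h_Y)$ above, with the Yoneda map $\Hom(X,Y)\to\Hom(h_X,h_Y)$. This is a diagram chase: both maps send $f\colon X\to Y$ to the system of composites $X_n\to X\xto{f} Y$, viewed either as an element of the inverse limit or as the natural transformation $h_X=\colim_n h_{X_n}\to h_Y$ whose $n$-th component is $h_{f\circ(X_n\to X)}$. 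The kernel of the Yoneda map thus equals $\Ph(X,Y)$, and the induced map on the quotient is the asserted isomorphism.
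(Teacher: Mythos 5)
Your proof is correct and follows essentially the same route as the paper: the identification $\Hom(X,Y)/\Ph(X,Y)\cong\lim_n\Hom(X_n,Y)$ via Lemma~\ref{le:phantom}, the bijectivity of restricted Yoneda on the coproducts $X_n$, the colimit--limit exchange, and $h_X\cong\colim_n h_{X_n}$ via Lemma~\ref{le:hocolim} are exactly the paper's four isomorphisms. Your final verification that the composite agrees with the map induced by the restricted Yoneda functor is a point the paper leaves implicit, and it is checked correctly.
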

  \begin{proof}
   We have
    \begin{align*}
      \Hom(X,Y)/\Ph(X,Y)&\cong\lim_n\Hom(X_n,Y)\\
                                &\cong\lim_n\Hom(h_{X_n},h_{Y})\\
                                &\cong\Hom(\colim_nh_{X_n},h_{Y})\\
      &\cong\Hom(h_X,h_Y).
    \end{align*}
    The first isomorphism follows from Lemma~\ref{le:phantom}, the
    second uses that each $X_n$ is a coproduct of objects in $\C$, the
    third is clear, and the last follows from
    Lemma~\ref{le:hocolim}.
 \end{proof}   

\begin{proof}[Proof of Proposition~\ref{pr:perfect}]
  Set $\P=\proj A$. The inclusion $\proj A\to\mod A$ induces a triangle equivalence
\[\bfK^{-,b}(\proj A)\longiso\bfD^b(\mod A).\] Thus we may identify $\bfD^b(\proj A)\to\bfD^b(\mod A)$
with the inclusion
\[\bfK^b(\P)\lto \bfK^{-,b}(\P).\]
We think of these as subcategories of $\bfK(\Proj A)$, where $\Proj A$
denotes the category of all projective $A$-modules. In particular
$\bfK(\Proj A)$ has arbitrary coproducts and all objects from
$\bfK^b(\P)$ are compact.
  
For any complex $X$ we consider the sequence of truncations
\[\cdots\lto \s_{\ge n+1}X\lto \s_{\ge n}X\lto \s_{\ge n-1}X\lto\cdots\]
given by
\[
\begin{tikzcd}
\s_{\ge n}X\arrow{d}& \cdots \arrow{r}&
  0\arrow{r}{}\arrow{d}&0\arrow{r}{}\arrow{d}&
X^{n}\arrow{r}\arrow{d}{\id}&X^{n+1}\arrow{r}\arrow{d}{\id}&\cdots \\ 
X&\cdots\arrow{r}&
X^{n-2}\arrow{r}{}&X^{n-1}\arrow{r}{}&X^n\arrow{r}&X^{n+1}\arrow{r}&\cdots .
\end{tikzcd}
\]
For $X$  in $\bfK^{-,b}(\P)$ and $n\in\bbZ$  we set
$X_n= \s_{\ge -n}X$. This yields a Cauchy sequence
\[ X_0\lto X_1\lto X_2\lto\cdots\] in $\bfK^b(\P)$ with
$\hocolim_{n\ge 0}X_n\cong X$.

We claim that the restricted Yoneda functor
  \[\bfK^{-,b}(\P)\lto \Coh\bfK^b(\P),\quad
    X\mapsto h_X:=\Hom(-,X)|_{\bfK^b(\P)},\] is fully faithful.  Let
  $X,Y$ be objects in $\bfK^{-,b}(\P)$. As before we write $X$ as homotopy
  colimit of its truncations $X_n=\s_{\ge -n}X$ and denote by $C_n$  the
  cone of $X_{n-1}\to X_{n}$. This complex is concentrated in degree
  $-n$; so $\Hom(C_n,Y)=0$ for $n\gg 0$. Thus $X_{n}\to X_{n+1}$
  induces a bijection
\[\Hom(X_{n+1},Y)\longiso\Hom(X_{n},Y) \quad\text{for}\quad n\gg
  0.\] This implies
 \[\Hom(X,Y)\longiso\lim_n\Hom(X_n,Y)\]
 and therefore $\Ph(X,Y)=0$ by Lemma~\ref{le:phantom}.  From
 Lemma~\ref{le:yoneda} we conclude that
  \[\Hom(X,Y)\longiso\Hom(h_X,h_Y).\qedhere\]
\end{proof}

From the  proof of Proposition~\ref{pr:perfect}  we learn that  each
complex in $\bfD^b(\mod A)$ is not only a filtered colimit of perfect
complexes; it is actually the colimit of a Cauchy sequence which is
obtained from its truncations. In particular, we are in the situation
that a homotopy colimit is an honest colimit, and therefore unique up to a
unique isomorphism.

\section{Completion using enhancements}

While the ind-completion of a category is a fairly explicit
construction, it is not immediately clear how to deal with additional
structure. In particular, there is no obvious triangulated structure
for $\Ind\C$ when $\C$ is triangulated. One way to address this
problem is the use of enhancements.

Recall that a triangulated category is \emph{algebraic} if it is
triangle equivalent to the stable category $\St\A$ of a Frobenius
category $\A$. 
A morphism between exact triangles
\[
\begin{tikzcd}
  X\arrow{r}\arrow{d}&Y\arrow{r}\arrow{d}&
  Z\arrow{r}\arrow{d}&\Si
  X\arrow{d}\\
  X'\arrow{r}&Y'\arrow{r}&Z'\arrow{r}&\Si X'
\end{tikzcd}
\]
in $\St\A$ will be called \emph{coherent} if it can be lifted to
a morphism
\[
\begin{tikzcd}
  0\arrow{r}&\tilde X\arrow{r}\arrow{d}& \tilde
  Y\arrow{r}\arrow{d}&
  \tilde  Z\arrow{r}\arrow{d}&0\\
  0\arrow{r}&\tilde X'\arrow{r}&\tilde
  Y'\arrow{r}&\tilde Z'\arrow{r}&0
\end{tikzcd}
\]
between  exact sequences in $\A$ so that the canonical functor
$\A\to\St\A$ maps the second to the first diagram.

\begin{defn}
  Let $\C$ be a triangulated category and $\X$ a class of sequences in
$\C$.  We
say that $\X$ is \emph{phantomless} if for any pair of sequences $X,Y$
in $\X$ we have
\begin{equation*}\label{eq:ph-less}
  \sideset{}{^{1}}\lim_i\colim_j\Hom(X_i,Y_j)=0.
\end{equation*}
\end{defn}

This definition is consistent with our previous discussion of phantom
morphisms in the following sense. Let $\C\subseteq\T$ be a
triangulated subcategory consisting of compact objects such that $\T$
admits countable coproducts. Suppose that $\X$ is \emph{stable under
  suspensions}, i.e.\ $(X_i)_{i\ge 0}$ in $\X$ implies
$(\Si^nX_i)_{i\ge 0}$ in $\X$ for all $n\in\bbZ$. Then $\X$ is
phantomless if and only if \[\Ph(\hocolim_i X_i, \hocolim_j Y_j)=0\]
for all $X,Y$ in $\X$. This follows from Lemmas~\ref{le:hocolim} and
\ref{le:phantom}.

\begin{prop}[{\cite[Theorem~4.7]{Kr2020}}]\label{pr:tria-completion}
  Let $\C$ be an algebraic triangulated category, viewed as a full
  subcategory of its sequential completion $\Ind_\bbN\C$. Let $\X$ be
  a class of sequences in $\C$ that is phantomless, closed
  under suspensions, and closed under the formation of cones. Then the full
  subcategory $\Ind_\X\C\subseteq\Ind_\bbN\C$ given by the colimits of
  sequences in $\X$ admits a unique triangulated structure such
  that the exact triangles are precisely the ones isomorphic to
  colimits of sequences that are given by coherent morphisms of
  exact triangles in $\C$.
\end{prop}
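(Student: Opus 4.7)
The plan is to exploit the algebraic enhancement $\C\simeq \St\A$ together with the countable envelope $\widetilde\A$ from Example~\ref{ex:countable-envelope}, and then transport the triangulated structure of $\St\widetilde\A$ along a fully faithful embedding $\Ind_\X\C\hookrightarrow \St\widetilde\A$. First, for a sequence $(X_i)_{i\ge 0}$ in $\C$ with transition maps $\varphi_i\colon X_i\to X_{i+1}$, lift each $X_i$ to an object $\tilde X_i$ of $\A$ and replace each $\varphi_i$ by an admissible monomorphism $\tilde X_i\rightarrowtail \tilde X_{i+1}$ representing it in $\St\A$ (the standard trick: embed $\tilde X_i$ into an injective $I_i$ and form $(\tilde\varphi_i,\iota_i)\colon \tilde X_i\to \tilde X_{i+1}\oplus I_i$). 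This produces an object $\tilde X\in\widetilde\A$. Since $\A$ is Frobenius, a direct check shows that $\widetilde\A$ inherits a Frobenius structure (projective-injective objects of $\widetilde\A$ being the sequences of split admissible monomorphisms between injectives of $\A$), so $\St\widetilde\A$ carries a canonical triangulated structure.

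Next, promote $X\mapsto \tilde X$ to a functor $\Phi\colon\Ind_\X\C\to \St\widetilde\A$. On morphisms, a class in $\Hom_{\Ind_\bbN\C}(X,Y)=\lim_i\colim_j\Hom(X_i,Y_j)$ is represented by a compatible family of maps $X_i\to Y_{j(i)}$ in $\C$ which can be lifted to chain maps $\tilde X\to \tilde Y$ in $\widetilde\A$ after passing to cofinal subsequences of $\tilde Y$; two such lifts differ by stable boundaries. To show $\Phi$ is fully faithful, use the standard short exact sequence
\[ 0\lto \sideset{}{^{1}}\lim_i\colim_j\Hom(\Sigma X_i,Y_j)\lto \Hom_{\St\widetilde\A}(\tilde X,\tilde Y)\lto \lim_i\colim_j\Hom(X_i,Y_j)\lto 0, \]
which arises from the description of stable morphisms out of a colimit-like object in $\widetilde\A$ (analogous to Lemma~\ref{le:phantom}). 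The phantomless assumption kills the $\lim^1$ term, so $\Phi$ induces a bijection on morphisms matching the defining formula of $\Ind_\bbN\C$.

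Having placed $\Ind_\X\C$ inside $\St\widetilde\A$ as a full additive subcategory, identify the triangulated structure. Closure under the suspension $\Sigma$ inherited from $\C$ is the first hypothesis on $\X$. A distinguished triangle in $\St\widetilde\A$ with vertices $\tilde X,\tilde Y$ comes from a short exact sequence $0\to \tilde X\to \tilde Y\to \tilde Z\to 0$ in $\widetilde\A$; at the level of sequences this is precisely a short exact sequence of levelwise exact sequences in $\A$, i.e.\ a coherent morphism of exact triangles in $\C$. The closure of $\X$ under cones guarantees that the resulting $\tilde Z$ again lies in the image of $\Phi$, so $\Ind_\X\C$ is a triangulated subcategory of $\St\widetilde\A$ whose distinguished triangles are exactly the colimits of coherent sequences of triangles in $\C$. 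Uniqueness: any triangulated structure on $\Ind_\X\C$ extending the one on $\C$ and having the prescribed class of triangles is forced, since the suspension is fixed by the embedding into the ind-completion and the triangles are specified.

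The main obstacle is the Hom computation of the middle step: one must argue that morphisms in $\St\widetilde\A$ between lifted sequences fit into the $\lim/\lim^1$ exact sequence above, and that the obstruction is exactly the phantom subgroup as defined. This requires a careful analysis of injective resolutions inside $\widetilde\A$ in terms of sequences of injective resolutions in $\A$, so that stable morphisms can be unwound term by term. Once this is established, the remaining closure arguments for cones, and the matching between short exact sequences in $\widetilde\A$ and coherent triangles in $\C$, are direct consequences of the definitions.
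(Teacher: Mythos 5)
Your proposal is correct and follows essentially the same route as the paper's proof: identify $\C$ with $\St\A$, pass to the stable category of the countable envelope $\A\tilde{\phantom{e}}$, embed $\Ind_\X\C$ fully faithfully by sending a sequence to its (homotopy) colimit, kill the $\lim^1$ obstruction using phantomlessness together with closure under suspension, and then transport the triangulated structure via coherent morphisms of triangles. The Hom computation you flag as the main remaining obstacle needs no fresh analysis of injective resolutions in $\A\tilde{\phantom{e}}$: your lifted object $\tilde X$ is exactly a homotopy colimit of the $X_i$, which are compact in $\St\A\tilde{\phantom{e}}$, so Lemmas~\ref{le:hocolim}, \ref{le:phantom} and \ref{le:yoneda} already supply the exact sequence you postulate and the identification of its kernel with the phantom subgroup, which is precisely how the paper's proof concludes.
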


Let us spell out the triangulated structure for $\Ind_\X\C$. Fix a
sequence of coherent morphisms $\eta_0\to\eta_1\to\eta_2\to\cdots$ of
exact triangles
  \[\eta_i\colon X_i\lto Y_i\lto Z_i\lto\Si X_i\] in $\C$ and suppose
  that it is also a sequence of morphisms $X\to Y\to Z\to \Si X$ of 
  sequences in $\X$. This identifies with the sequence
  \[\colim_i X_i \lto \colim_i Y_i\lto \colim_i Z_i\lto \colim_i\Si X_i\]
in $\Ind_\X\C$, and the exact triangles in  $\Ind_\X\C$ are
precisely sequences of morphisms that are isomorphic to sequences of
the above form.

\begin{proof}[Proof of Proposition~\ref{pr:tria-completion}]
  We use the enhancement as follows. Suppose that $\C=\St\A$ for some
  Frobenius category $\A$. We denote by $\C\tilde{\phantom{e}}$ the
  stable category $\St\A\tilde{\phantom{e}}$ of the countable envelope
  of $\A$; see Example~\ref{ex:countable-envelope}. This is a
  triangulated category with countable coproducts and $\C$ identifies
  with a full subcategory of compact objects.

  Given sequences $X,Y$ in
  $\X$ we set $\bar X=\hocolim_i X_i$ and $\bar Y=\hocolim_j Y_j$ in
  $\C\tilde{\phantom{e}}$. Using that $\X$ is phantomless we compute
\[\lim_i\colim_j\Hom(X_i,Y_j)\cong\Hom(h_X,h_Y)\cong\Hom(\bar X,\bar
  Y)\] where the first isomorphism follows from
Lemma~\ref{le:hocolim} and the second from Lemma~\ref{le:yoneda}.
Thus taking a sequence in $\X$ to its homotopy colimit in
$\C\tilde{\phantom{e}}$ provides a fully faithful functor
\[\hocolim\colon\Ind_\X\C\lto\C\tilde{\phantom{e}}.\]
Then it remains to compare the triangulated structures on both side,
which turn out to be equivalent by construction.
\end{proof}

The above result admits a substantial generalisation, from algebraic
triangulated categories to triangulated categories with a \emph{morphic
enhancement} in the sense of Keller
\cite[Appendix~C]{Kr2020}. Moreover, in some interesting cases the
morphic enhancement extends to a morphic enhancement of the
completion.

\begin{exm}
  For a right coherent ring $A$ let $\X$ denote the class of Cauchy
  sequences $(X_i)_{i\ge 0}$ in $\bfD^b(\proj A)$ such that
  $\colim_i H^n(X_i)=0$ for $|n|\gg 0$. Then $\X$ is phantomless and
  we have a triangle equivalence
  \[\Ind_\X\bfD^b(\proj A)\iso\bfD^b(\mod A).\]
\end{exm}

We end these notes with a couple of references that complement
our approach towards the completion of triangulated categories. The work
of Neeman offers an intriguing approach that uses metrics on
triangulated categories, thereby avoiding the use of any enhancements
\cite{Ne2018,Ne2020}. On the other hand, there is Lurie's approach via
stable $\infty$-categories \cite[\S1]{Lu2017}; it uses a notion of 
enhancement that is far more sophisticated than the one presented in
these notes.


\begin{thebibliography}{10}

\bibitem{BG2023} D.~J. Benson and\ J.~P.~C. Greenlees, Modules with finitely
  generated cohomology, and singularities of $C^{*}BG$,
  arXiv:2305.08580, 2023.
  
\bibitem{BIKP2023} D.~J. Benson, S.~B. Iyengar, H.~Krause, and J.~Pevtsova,   Local
dualisable objects in local algebra, arXiv:2302.08562, 2023.

\bibitem{BK2008} D.~J. Benson\ and\ H. Krause, Complexes of injective
  $kG$-modules, Algebra Number Theory {\bf 2} (2008), no.~1, 1--30.

\bibitem{BV2003} A.~I. Bondal\ and\ M. Van~den~Bergh, Generators and
  representability of functors in commutative and noncommutative
  geometry, Mosc. Math. J. {\bf 3} (2003), no.~1, 1--36, 258.

\bibitem{Ca1872} G. Cantor, Ueber die Ausdehnung eines Satzes aus der
  Theorie der trigonometrischen Reihen, Math. Ann. {\bf 5} (1872),
  no.~1, 123--132.

\bibitem{CB1994} W. Crawley-Boevey, Locally finitely presented
  additive categories, Comm. Algebra {\bf 22} (1994), no.~5,
  1641--1674.

\bibitem{DG2002} W.~G. Dwyer\ and\ J.~P.~C. Greenlees, Complete
  modules and torsion modules, Amer. J. Math. {\bf 124} (2002), no.~1,
  199--220.
  
\bibitem{GM1992} J.~P.~C. Greenlees\ and\ J.~P. May, Derived functors
  of $I$-adic completion and local homology, J. Algebra {\bf 149}
  (1992), no.~2, 438--453.
  
\bibitem{GV1972}A. Grothendieck and J. L. Verdier, Pr\'efaisceaux, in {\it
    SGA 4, Th\'eorie des Topos et Cohomologie Etale des Sch\'emas,
    Tome 1. Th\'eorie des Topos}, 1--184, Lecture Notes in Math., 269,
  Springer, Heidelberg, 1972.

\bibitem{Ke1990} B. Keller, Chain complexes and stable categories,
  Manus. Math. {\bf 67} (1990), 379--417.

\bibitem{Kr2000} H. Krause, Smashing subcategories and the telescope
  conjecture---an algebraic approach, Invent. Math. {\bf 139} (2000),
  no.~1, 99--133.

\bibitem{Kr2012} H. Krause, Report on locally finite triangulated
  categories, J. K-Theory {\bf 9} (2012), no.~3, 421--458.

\bibitem{Kr2015} H. Krause, Deriving Auslander's formula,
  Doc. Math. {\bf 20} (2015), 669--688.
  
\bibitem{Kr2020} H. Krause, Completing perfect
  complexes, Math. Z. {\bf 296} (2020), no.~3-4, 1387--1427.

\bibitem{Kr2022} H. Krause, {\it Homological theory of
    representations}, Cambridge Studies in Advanced Mathematics, 195,
  Cambridge University Press, Cambridge, 2022.

\bibitem{Le1983} H. Lenzing, Homological transfer from finitely
  presented to infinite modules, in {\it Abelian group theory
    (Honolulu, Hawaii, 1983)}, 734--761, Lecture Notes in Math., 1006,
  Springer, Berlin, 1983.

\bibitem{Lu2017} J. Lurie, Higher Algebra,  2017.

\bibitem{Me1869} C. M\'eray, Remarques sur la nature des quantit\'es
  d\'efinies par la condition de servir de limites \`a des variables
  donn\'ees, Revue des Soci\'et\'es savantes,
  Sci. Math. phys. nat. (2) {\bf 4} (1869), 280--289.

\bibitem{Mi1962} J. Milnor, On axiomatic homology theory, Pacific
  J. Math. {\bf 12} (1962), 337--341.

\bibitem{Ne2018} A.~Neeman, The categories ${\mathcal T}^c$ and
  ${\mathcal T}^b_c$ determine each other, arXiv:1806.064714, 2018. 

\bibitem{Ne2020} A. Neeman, Metrics on triangulated categories, J. Pure
  Appl. Algebra {\bf 224} (2020), no.~4, 106206, 13 pp.

\bibitem{XZ2005} J. Xiao\ and\ B. Zhu, Locally finite triangulated
  categories, J. Algebra {\bf 290} (2005), no.~2, 473--490.

\end{thebibliography}
\end{document}